\documentclass[conference]{IEEEtran}%
\usepackage{amssymb}
\usepackage{amsmath}
\usepackage{amsfonts}
\usepackage{hyperref}%
\setcounter{MaxMatrixCols}{30}%
\usepackage{graphicx}
\providecommand{\U}[1]{\protect\rule{.1in}{.1in}}
\newtheorem{theorem}{Theorem}

\newtheorem{corollary}[theorem]{Corollary}

\newtheorem{proposition}[theorem]{Proposition}

\begin{document}

\title{Mathematical modelling and optimal control of anthracnose}%

\author{
\authorblockN{David FOTSA}
\authorblockA{
ENSAI\\
The University of Ngaoundere\\
Email: mjdavidfotsa@gmail.com
}
\and\authorblockN{Elvis HOUPA, David BEKOLLE}
\authorblockA{
Faculty of Science\\
The University of Ngaoundere\\
Email: e-houpa@yahoo.com, bekolle@yahoo.fr
}
\and\authorblockN{Christopher THRON}
\authorblockA{ \\
Texas A\&M University, Central Texas\\
Email:thron@ct.tamus.edu
}
\and\authorblockN{Michel NDOUMBE}
\authorblockA{
IRAD, Cameroon\\
Email: michel.ndoumbe@yahoo.com}
}%
%

\maketitle
%

\begin{abstract}%
In this paper we propose two nonlinear models for the control of anthracnose
disease. The first is an ordinary differential equation (ODE) model which
represents the within-host evolution of the disease. The second includes
spatial diffusion of the disease in a bounded domain. We demonstrate the
well-posedness of those models by verifying the existence of solutions for
given initial conditions and positive invariance of the positive cone. By
considering a quadratic cost functional and applying a maximum principle, we
construct a feedback optimal control for the ODE model which is evaluated
through numerical simulations with the scientific software
Scilab\textregistered. For the diffusion model we establish under some
conditions the existence of a unique optimal control with respect to a
generalized version of the cost functional mentioned above. We also provide a
characterization for this optimal control.

\textbf{KeyWords--- }Anthracnose modelling, nonlinear systems, optimal control.

\textbf{AMS Classification--- }49J20, 49J15, 92D30, 92D40.%

\end{abstract}%

\section{Introduction}

\qquad Anthracnose is a phytopathology which attacks a wide range of
commercial crops, including almond, mango, banana, blueberry, cherry, citrus,
coffee, hevea and strawberry. The disease has been identified in such diverse
areas as Ceylon (1923), Guadeloupe (1925), Sumatra (1929), Indochina (1930),
Costa Rica (1931), Malaysia (1932), Java (1933), Madagascar (1934), Cameroon
(1934), Colombia (1940), Salvador (1944), Brazil (1946), Nyassaland (1949),
New Caledonia (1954), and Arabia (1956) \cite{boisson}. Anthracnose can affect
various parts of the plant, including leaves, fruits, twigs and roots.
Possible symptoms include defoliation, fruit rot, fruit fall and crown root
rot, which can occur before or after harvest depending on both pathogen and
host \cite{boisson,wharton}.
\begin{figure}
[ptbh]
\begin{center}
\includegraphics[
natheight=4.333600in,
natwidth=6.715300in,
height=2.3298in,
width=3.5993in
]%
{cbdsymptoms.jpg}%
\caption{Symptoms of Coffee Berry Disease (CBD) \cite{bieysse}}%
\label{CBDSymptoms}%
\end{center}
\end{figure}

The Anthracnose pathogen belongs to the \textit{Colletotrichum} species
(\textit{acutatum}, \textit{capsici,} \textit{gloeosporioides, kahawae,
lindemuthianum, musae, ...}). \textit{Colletotrichum} is an ascomycete fungus.
It can reproduce either asexually or sexually, but sexual reproduction is rare
in nature \cite{wharton}. Favourable growth conditions occur particularly in
tropical zones. Rainfall, wetness and altitude are all conducive to
sporulation and conidia spreading \cite{mouen09,muller70}. Sources of inoculum
are thought to be leaves, buds and mummified fruits.

\subsection{Anthracnose pathosystem}

\qquad The process of infection by \textit{Colletotrichum} species can usually
be divided into at least seven steps, depending on various factors including
growth conditions, host tissues and involved species. Conidia deposited on the
host attach themselve on its surface. The conidia germinate after 12--48
hours, and appressoria are produced \cite{boisson,jeffries}. Severals studies
on infection chronology show that appressoria production can occur between
3--48 hours following germination under favourable conditions of wetness and
temperature \cite{mouen09,wharton}. The pathogen then penetrates the plant
epidermis, invades plant tissues, produces acevuli and finally sporulates. The
penetration of plant epidermis is enabled by a narrow penetration peg that
emerges from the appressorium base \cite{chen}. In some marginal cases
penetration occurs through plant tissues' stomata or wounds. Once the cuticle
is crossed, two infection strategies can be distiguished: intracellular
hemibiotrophy and subcuticular intramural necrotrophy, as shown in
Figure~\ref{HostInvasion}. Invasion of the host is led through formation of
hyphae which narrow as the infection progresses. \textit{Colletotrichum}
produce enzymes that degrade carbohydrates, dissolve cell walls, and hydrolyze
cuticle. Some of those enzymes are polyglacturonases, pectin lyases and
proteases. Some hosts may employ various biochemical strategies to counter the
pathogen. For example, the peel of unripe avocados has been found in vitro to
contain a preformed antifungal diene (cis,
cis-1-acetoxy-2-hydroxy-4-oxo-heneicosa-12, 15-diene) that inhibits the growth
of \textit{Colletotrichum gloeosporioides} when present above a certain
concentration \cite{wharton}.%

\begin{figure}
[ptbh]
\begin{center}
\includegraphics[
natheight=4.333600in,
natwidth=10.083700in,
height=1.3889in,
width=3.2197in
]%
{EvolutionColletotrichumII.jpg}%
\caption{Infection strategies. (A)=Apressorium - (C)=Conidium - (Cu)=Cuticle -
(E)=Epidermal - (ILS)=Internal Light Spot - (M)=Mesophyl cell -
(N)=Necrotrophic - (PH)=Primary Hyphae - (PP)=Penetration Peg -
(ScH)=Subcuticular and Intramural Hyphae - (SH)=Secondary Hyphae
\cite{wharton}}%
\label{HostInvasion}%
\end{center}
\end{figure}

\subsection{Models in the literature}

\qquad Most previous mathematical studies on Colletotrichum-host pathosystem
have focused on forecasting disease onset based on environmental factors
affecting host sensitivity. DANNEBERGER et al. in \cite{danneberger} have
developed a forecasting model for the annual bluegrass anthracnose severity
index, using weather elements such as temperature and wetness. Their model is
a quadratic regression
\[
ASI=a_{0}+a_{0,1}W+a_{1,0}T+a_{1,1}T\times W+a_{0,2}T^{2}+a_{2,0}W^{2}%
\]
where $ASI$ is the anthracnose severity index, $T$ is the daily average
temperature and $W$ is the average number of hours of leaves' wetness per day.
DODD et al. in \cite{dodd} have studied the relationship between temperature
$(T)$, relative humidity $(H)$, incubation period $(t)$ and the percentage
$(p)$ of conidia of Colletotrichum gloeosporioides producing pigmented
appressoria on one month old mangoes. They used the following logistic model:
\[
\ln\left(  p/\left(  1-p\right)  \right)  =a_{0}+a_{0,1}H+a_{1,0}%
T+a_{0,2}H^{2}+a_{2,0}T^{2}+b\ln\left(  t\right)
\]
DUTHIE in \cite{duthie} examines the parasite's response $(R)$ to the combined
effects of temperature $(T)$ and wetness duration $(W)$. That response could
be the rate of germination, infection efficiency, latent period, lesion
density, disease incidence or disease severity. Several models are discussed,
the two principal being
\[
R\left(  T,W\right)  =f\left(  T\right)  \left[  1-\exp\left(  -\left[
b\left(  W-c\right)  \right]  ^{d}\right)  \right]
\]
and
\[
R\left(  T,W\right)  =a\left[  1-\exp\left(  -\left[  f\left(  T\right)
\left(  W-c\right)  \right]  ^{d}\right)  \right]  ,
\]
where
\[
f\left(  T\right)  =\frac{e\left(  1+h\right)  h^{\frac{h}{1+h}}}{\left(
1+\exp\left(  g\left[  T-f\right]  \right)  \right)  }\exp\left(
\frac{g\left[  T-f\right]  }{1+h}\right)
\]
and
\[
a>0,\text{ }b>0\text{, }W\geq c\geq0\text{, }d>0\text{, }e>0\text{, }%
f\geq0\text{, }g>0\text{, }h>0.
\]
MOUEN et al. attempt in \cite{mouen07} to develop a spatio-temporal model to
analyse infection behaviour with respect to the time, and identify potential
foci for disease inoculum. Logistic regression and kriging tools are used
used. In addition to these references, there are several other statistical
models in literature
\cite{duthie,mouen07,mouen09,mouen072,mouen03,mouen08,wharton}.

\subsection{Controlling anthracnose}

\qquad There are many approaches to controlling anthracnose diseases. The
genetic approach involves selection or synthesis of more resistant cultivars
\cite{bella,bieysse,boisson,ganesh,silva}. Several studies have demonstrated
the impact of cultivational practices on disease dynamics
\cite{mouen072,mouen03,mouen08,wharton}. Other tactics may be used to reduce
predisposition and enhance resistance, such as pruning old infected twigs,
removing mummified fruits, and shading \cite{boisson}. Biological control uses
microorganisms or biological substrates which interact with pathogen or induce
resistance in the host \cite{durand}. Finally there is chemical control, which
requires the periodic application of antifungal compounds
\cite{boisson,muller67,muller71}. This seems to be the most reliable method,
though relatively expensive. The best control policy should schedule different
approaches to optimize quality, quantity and cost of production. Note that
inadequate application of treatments could induce resistance in the pathogen
\cite{ramos}.

\subsection{Organization of the paper}

\qquad The remainder of this paper is organized as follows. In section
\ref{WithinHostModel} we propose and study a within-host model of anthracnose.
We present that model and give parameters meaning in subsection
\ref{WithinHostModel1}. Throughout subsection \ref{WithinHostModel2} we
establish the well-posedness of the within-host model both in mathematical and
epidemiological senses. The optimal control of the model is surveyed in
subsection \ref{WithinHostModel3} and numerical simulations are performed in
the last subsection \ref{WithinHostModel4}. We make a similar study on a
spatial version of the model includind a diffusion term in section
\ref{sec:diffusion}. That last model is presented in subsection
\ref{DiffusionModel1}. Studies on its well-posedness and its optimal control
are made respectively in subsections \ref{DiffusionModel2} and
\ref{DiffusionModel3}. Finally, in section \ref{Discussion} we discuss our
modelling and some realistic generalizations which could be added to the model.

\section{A within-host model\label{WithinHostModel}}

\subsection{Specification of the within-host model\label{WithinHostModel1}}

The detrimental effects of \textit{Colletotrichum} infection on fruit growth
are closely related to its life cycle. It is mathematically convenient to
express these effects in terms of the effective inhibition rate (denoted by
$\theta$), which is a continuous function of time. The effective inhibition
rate is defined such that the maximum attainable fruit volume is reduced by a
factor $1-\theta$ if current infection conditions are maintained. In addition
to $\theta$, the other time-dependent variables in the model are host fruit
total volume and infected volume, denoted by $v$ and $v_{r}$ respectively. We
have on the set $S=%
\mathbb{R}
_{+}\setminus\left\{  1\right\}  \times%
\mathbb{R}
_{+}^{\ast}\times%
\mathbb{R}
_{+}$ the following equations for the time-evolution of the variables
($\theta,v,v_{r})$:%

\begin{equation}
\left\{
\begin{array}
[c]{l}%
d\theta/dt=\alpha\left(  t,\theta\right)  \left(  1-\theta/\left(
1-\theta_{_{1}}u\left(  t\right)  \right)  \right) \\
dv/dt=\beta\left(  t,\theta\right)  \left(  1-v\theta_{_{2}}/\left(  \left(
1-\theta\right)  \eta\left(  t\right)  v_{\max}\right)  \right) \\
dv_{r}/dt=\gamma\left(  t,\theta\right)  \left(  1-v_{r}/v\right)
\end{array}
\right. \label{ModelIntraHote}%
\end{equation}
The parameters in $\left(  \ref{ModelIntraHote}\right)  $ have the following
practical interpretations:

\begin{itemize}
\item $\alpha,\beta,\gamma$ characterize the effects of environmental and
climatic conditions on the rate of change of inhibition rate, fruit volume,
and infected fruit volume respectively. These are all positive functions of
the time $t$ and inhibition rate $\theta$.

\item $\gamma$ is an increasing function with respect to $\theta$ and
satisfies $\gamma\left(  t,0\right)  =0$, $\forall t\geq0$.

\item $u$ is a measurable control parameter which takes values in the set
$\left[  0,1\right]  $.

\item $1-\theta_{_{1}}\in\left[  0,1\right]  $ is the inhibition rate
corresponding to epidermis penetration. Once the epidermis has been
penetrated, the inhibition rate cannot fall below this value, even under
maximum control effort. In the absence of control effort $(u(t)=0)$, the
inhibition rate increases towards 1.

\item $\eta$ is a function of time that characterizes the effects of
environmental and climatic conditions on the maximum fruit volume. Its range
is the interval $\left]  0,\theta_{_{2}}\right]  $.

\item $v_{\max}$ represents the maximum size of the fruit.

\item $1-\theta_{_{2}}\in\left[  0,1\right]  $ is the value of inhibition rate
$\theta$ that corresponds to a limiting fruit volume of $\eta v_{\max} $.
According to the second equation in (\ref{ModelIntraHote}), the limiting
volume size is $\eta v_{\max}\left(  1-\theta\right)  /\theta_{_{2}}\leq
v_{\max}$. When the volume is less than this value, it increases (but never
passes the limiting value); while if the volume exceeds this value, then it
decreases. This limiting value for $v$ is less than $\eta v_{\max}$ when
$\theta>1-\theta_{_{2}}$ (note $\eta\leq\theta_{2}\leq1$).
\end{itemize}

Note that equations $\left(  \ref{ModelIntraHote}\right)  $ are constructed so
that $v \le v_{\max}$ and $v_{r} \le v$ as long as initial conditions satisfy
these inequailities.

With the definitions
\[
A\equiv%
\begin{bmatrix}
\frac{-\alpha\left(  t,\theta\right)  }{\left(  1-\theta_{_{1}}u\left(
t\right)  \right)  } & 0 & 0\\
0 & -\frac{\theta_{_{2}}\beta\left(  t,\theta\right)  }{\left(  \left(
1-\theta\right)  \eta\left(  t\right)  v_{\max}\right)  } & 0\\
0 & 0 & -\frac{\gamma\left(  t,\theta\right)  }{v}%
\end{bmatrix}
,
\]%
\[
B\equiv%
\begin{bmatrix}
\alpha\left(  t,\theta\right)  & \beta\left(  t,\theta\right)  & \gamma\left(
t,\theta\right)
\end{bmatrix}
^{T},
\]
and
\[
X\equiv%
\begin{bmatrix}
\theta & v & v_{r}%
\end{bmatrix}
^{T},
\]
then model $\left(  \ref{ModelIntraHote}\right)  $ can be reformulated as
\begin{equation}
dX/dt=F\left(  t,X\right)  ,\label{EqGeneral}%
\end{equation}
where
\begin{equation}
F\left(  t,X\right)  \equiv A\left(  t,X,u\right)  X+B\left(  t,X\right)  .
\end{equation}

As indicated above, model $\left(  \ref{ModelIntraHote}\right)  $ is an
exclusively within-host model, and as such does not include the effects of
spreading from host to host. (In Section~\ref{sec:diffusion} we propose a
diffusion model for between-host spreading.) Such a model has several
practical advantages. In practice, monitoring of the spreading of the fungi
population is difficult. Furthermore, conidia sources and spreading mechanisms
are not well-understood, although the literature generally points to mummified
fruits, leaves and bark as sources of inoculum. Instead of controlling the
host-to-host transmission, an alternative control method is to slow down the
within-host fungi evolution process. Such an approach enables the use of
statistical methods, since large samples of infected hosts may easily be
obtained \cite{jeffries}.

\subsection{Well-posedness of the within-host model\label{WithinHostModel2}}

In the following discussion, we demonstrate that model $\left(
\ref{ModelIntraHote}\right)  $ is well-posed both mathematically and
epidemiologically, under the following standard technical assumptions:

$\left(  H1\right)  $ The control parameter $u$ is measurable.

$\left(  H2\right)  $ The function $F$ is continuous with respect to the
variable $X$.

$\left(  H3\right)  $ For every compact subset $K$ $\subset S$, there is an
integrable map $M_{K}:%
\mathbb{R}
_{+}\rightarrow%
\mathbb{R}
_{+}$ such that for every $X$ in $K$ and $t$ in $%
\mathbb{R}
_{+}$, $\left\Vert F\left(  t,X\right)  \right\Vert _{S}\leq M_{K}\left(
t\right)  $.

Existence of a solution is guaranteed by the following proposition, which
follows from a simple application of the Carath\'{e}odory theorem.

\begin{proposition}
For every initial condition $\left(  t_{0},X_{0}\right)  $ in $%
\mathbb{R}
_{+}\times S$ there is a function $X\left(  t_{0},X_{0},t\right)  $ which is
absolutely continuous and satisfies $\left(  \ref{EqGeneral}\right)  $ for
almost any time $t\in%
\mathbb{R}
_{+}$.
\end{proposition}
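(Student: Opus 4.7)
The plan is to apply the Carath\'{e}odory existence theorem directly to the reformulated equation $dX/dt=F(t,X)$ on $\mathbb{R}_{+}\times S$. Three hypotheses must be verified: continuity of $X\mapsto F(t,X)$ for each fixed $t$; measurability of $t\mapsto F(t,X)$ for each fixed $X$; and a locally integrable majorant on compact subsets of $S$. Two of these --- continuity in $X$ and the integrable bound --- are exactly (H2) and (H3), so the only non-trivial work is measurability in $t$.

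First I would fix an initial datum $(t_{0},X_{0})\in\mathbb{R}_{+}\times S$ and a compact neighbourhood $K\subset S$ of $X_{0}$. On $K$, I would verify measurability of $t\mapsto F(t,X)$ by unpacking the block structure of $F$: the entries of $A(t,X,u(t))$ depend on $t$ through the coefficient functions $\alpha,\beta,\gamma,\eta$, assumed continuous in their arguments, and through $u(t)$, which is measurable by (H1). The scalar map $u\mapsto 1/(1-\theta_{1}u)$ is continuous on $[0,1]$, so its composition with the measurable function $u$ remains measurable; the same reasoning handles the additive term $B(t,X)$. Hence $F$ is a Carath\'{e}odory function on $K$.

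With the three hypotheses in hand, Carath\'{e}odory's theorem produces an absolutely continuous function $X(t_{0},X_{0},\cdot)$ defined on some interval $[t_{0},T)\subset\mathbb{R}_{+}$ and satisfying $dX/dt=F(t,X)$ almost everywhere on that interval. A standard continuation argument --- extending the solution so long as its graph stays inside a compact subset of $S$ --- then yields the absolutely continuous trajectory claimed in the statement.

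The main obstacle, and essentially the only step that is not mechanical, is this verification of measurability of $t\mapsto F(t,X)$: because $u$ is only measurable rather than continuous, one must explicitly argue that the nonlinear compositions appearing in $A$ preserve measurability. Once this is settled, the rest of the argument is a direct citation of Carath\'{e}odory. I note in passing that genuine global existence on all of $\mathbb{R}_{+}$, rather than merely on a maximal subinterval, would require the a priori bounds supplied by the positive-invariance results to be established in the remainder of the subsection.
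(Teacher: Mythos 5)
Your proposal is correct and follows exactly the route the paper takes: the paper's entire argument is the one-line remark that the proposition ``follows from a simple application of the Carath\'{e}odory theorem'' under $(H1)$--$(H3)$, so your explicit verification that $t\mapsto F(t,X)$ is measurable (via $(H1)$ and continuity of the compositions involving $u$) and your caveat that extension from a maximal interval to all of $\mathbb{R}_{+}$ relies on the a priori bounds from the subsequent invariance theorems are simply more careful renderings of the same argument, not a different one.
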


Uniqueness and smoothness of the solution may be established using the
Cauchy-Lipschitz Theorem, based on properties ($H2$) and ($H3$) of the
function $F$.

Next we will etablish positive invariance of the set $S$, and the positive
invariance of a bounded subset $BS$. These results are needed to show
consistency of the biological interpretation of the solution, as explained
below. With the definitions%

\[
A_{1}\equiv%
\begin{bmatrix}
-\frac{\alpha\left(  t,\theta\right)  }{\left(  1-\theta_{_{1}}u\left(
t\right)  \right)  } & 0 & 0\\
0 & -\frac{\theta_{_{2}}}{\left(  \left(  1-\theta\right)  \eta\left(
t\right)  v_{\max}\right)  } & 0\\
0 & 0 & -\frac{1}{v}%
\end{bmatrix}
,
\]%
\[
A_{2}\equiv%
\begin{bmatrix}
\alpha\left(  t,\theta\right)  & 0 & 0\\
0 & \beta\left(  t,\theta\right)  & 0\\
0 & 0 & \gamma\left(  t,\theta\right)
\end{bmatrix}
,
\]%
\[
B_{1}\equiv%
\begin{bmatrix}
1 & 1 & 1
\end{bmatrix}
^{T},
\]
and $X$ as defined above, then model $\left(  \ref{ModelIntraHote}\right)  $
can be reformulated as
\begin{equation}
dX/dt=A_{2}\left(  A_{1}X+B_{1}\right) .\label{EqGeneral2}%
\end{equation}

\begin{theorem}
The set $S$ is positively invariant for the system $\left(  \ref{EqGeneral2}%
\right)  $.
\end{theorem}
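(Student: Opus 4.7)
My plan is to verify positive invariance face by face, checking on each portion of $\partial S$ that the vector field does not point out of $S$.

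The three coordinate faces $\theta=0$, $v=0$, and $v_r=0$ are the easy part. Substituting each into $(\ref{ModelIntraHote})$ one reads off $d\theta/dt=\alpha(t,0)>0$, $dv/dt=\beta(t,\theta)>0$, and $dv_r/dt=\gamma(t,\theta)\geq 0$ respectively, so the outward-normal component of the drift is nonpositive on each face. A continuity argument at a hypothetical first exit time, legitimate thanks to the absolute continuity of the Carath\'eodory solution supplied by the previous proposition, then forbids crossing any of them.

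The nontrivial piece is the interior-deleted face $\{\theta=1\}$, which is out of reach of a Nagumo tangent-cone argument because it sits in the interior of $\mathbb{R}_+$. For the connected component $\theta\in[0,1)$ I would rewrite the first equation of $(\ref{ModelIntraHote})$ as
\[
\frac{d(1-\theta)}{dt}=-\frac{\alpha(t,\theta)}{1-\theta_1 u(t)}\Bigl((1-\theta)-\theta_1 u(t)\Bigr)\geq -\frac{\alpha(t,\theta)}{1-\theta_1}(1-\theta),
\]
the discard being valid since $\alpha\theta_1 u/(1-\theta_1 u)\geq 0$ and $1-\theta_1 u\geq 1-\theta_1>0$. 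Gronwall's lemma, together with the local integrability of $\alpha(\cdot,\theta(\cdot))$ implied by $(H3)$, then yields
\[
1-\theta(t)\geq (1-\theta(t_0))\exp\Bigl(-\int_{t_0}^{t}\frac{\alpha(s,\theta(s))}{1-\theta_1}\,ds\Bigr)>0,
\]
keeping $\theta(t)$ strictly below $1$ on any bounded time interval. On the other component $\theta>1$ a symmetric estimate on $\theta-1$, combined with uniqueness of the Carath\'eodory solution, rules out merging with the stationary set $\theta\equiv 1$.

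The main obstacle is precisely this $\{\theta=1\}$ face: its drift need not strictly point into $S$ (in particular it vanishes identically when $u=0$), so invariance cannot be read off from sign information at the boundary and must instead be extracted from a quantitative Gronwall estimate combined with uniqueness. Once the three componentwise invariances above are assembled, their conjunction delivers positive invariance of $S$ under $(\ref{EqGeneral2})$.
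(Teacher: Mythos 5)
Your strategy is genuinely different from the paper's. The paper writes the system as $dX/dt=A_{2}A_{1}X+A_{2}B_{1}$ with $A_{2}A_{1}$ diagonal, invokes the variation-of-constants representation, and observes that $-A_{2}A_{1}$ is an $M$-matrix, so the fundamental matrix $\exp\left[\int_{s}^{t}A_{2}A_{1}\,d\xi\right]$ is entrywise nonnegative while $A_{2}B_{1}\geq0$; hence $X(t)\geq0$ whenever $X(0)\geq0$. That is the entire proof: it establishes invariance of the nonnegative orthant and never mentions the deleted hyperplane $\{\theta=1\}$. Your coordinate-face analysis plus the Gr\"{o}nwall estimate on $1-\theta$ is the same integrating-factor computation carried out componentwise, and on the component $\theta\in[0,1)$ it is correct and in fact proves more than the paper does. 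Two small technical remarks: on the face $v_{r}=0$ the drift is only $\gamma(t,\theta)\geq0$ (it vanishes where $\theta=0$ because $\gamma(t,0)=0$), so the ``strictly inward plus continuity at a first exit time'' reasoning does not apply there and you need the same Gr\"{o}nwall trick via $dv_{r}/dt\geq-(\gamma/v)v_{r}$; and replacing $1/(1-\theta_{1}u)$ by $1/(1-\theta_{1})$ already presupposes $1-\theta\geq0$, which is the thing being proved---keep the rate $\alpha/(1-\theta_{1}u(t))$ and the circularity disappears.

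The genuine gap is the component $\theta>1$. The set $\{\theta=1\}$ is \emph{not} stationary for the controlled dynamics: there $d\theta/dt=-\alpha\theta_{1}u/(1-\theta_{1}u)$, which is strictly negative whenever $u>0$, so the ``no merging with a stationary set by uniqueness'' argument is unavailable. The symmetric estimate fails for the same reason: $d(\theta-1)/dt=-\frac{\alpha}{1-\theta_{1}u}(\theta-1)-\frac{\alpha\theta_{1}u}{1-\theta_{1}u}$, and the term you would now have to discard is nonpositive, so Gr\"{o}nwall gives an upper bound on $\theta-1$ where you need a lower one. Concretely, with $\alpha\equiv1$, $u\equiv1$, $\theta_{1}=1/2$ the first equation reads $d\theta/dt=1-2\theta$, and the trajectory from $\theta(0)=2$ is $\theta(t)=1/2+(3/2)e^{-2t}$, which reaches $1$ at $t=\frac{1}{2}\ln3$: the $\theta$-component does cross the deleted hyperplane in finite time. (The only way the conclusion could survive on that component is that $v$ blows up as $\theta\to1^{+}$ through the factor $1-\theta$ in the denominator of the $v$-equation, a completely different argument that neither you nor the paper makes.) To be fair, the paper's own proof silently ignores $\{\theta=1\}$ as well; the defensible and biologically relevant statement, which your argument does deliver, is positive invariance of $[0,1)\times\mathbb{R}_{+}^{\ast}\times\mathbb{R}_{+}$, consistent with the set $BS$ treated in the following theorem.
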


\begin{proof}
A solution to $\left(  \ref{EqGeneral2}\right)  $\ satisfies for every time
$t\geq0$,
\begin{align*}
X\left(  t\right)   &  =\exp\left[  \int\nolimits_{0}^{t}A_{2}\left(
s\right)  \cdot A_{1}\left(  s\right)  ds\right]  X\left(  0\right) \\
&  +\int\nolimits_{0}^{t}\exp\left[  \int\nolimits_{s}^{t}A_{2}\left(
\xi\right)  A_{1}\left(  \xi\right)  d\xi\right]  A_{2}\left(  s\right)
B_{1}ds
\end{align*}
Since $-A_{2}\left(  s\right)  A_{1}\left(  s\right)  $ is a $M-$matrix for
every time $s\geq0$, $\exp\left[  \int\nolimits_{s}^{t}A_{2}\left(
\xi\right)  A_{1}\left(  \xi\right)  d\xi\right]  $ is a positive matrix.
Moreover, since $B_{1}$ is nonnegtive, one can conclude that $X$ remain
nonnegative when $X\left(  0\right)  $ is taken nonnegative.
\end{proof}

\begin{theorem}
Let $BS$ be the subset of $S$ defined such as
\[
BS=\left\{  \left(  \theta,v,v_{r}\right)  \in%
\mathbb{R}
^{3};0\leq\theta<1,0<v\leq v_{\max},0\leq v_{r}\leq v\right\}
\]
Then $BS$ is positively invariant for system $\left(  \ref{EqGeneral2}\right)
$.
\end{theorem}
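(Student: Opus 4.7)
The plan is to use a Nagumo-style tangent-cone argument, checking at each face of $\partial BS$ that the vector field of (\ref{EqGeneral2}) does not push trajectories out of $BS$. Combined with the previous theorem (which gives $\theta\ge 0$, $v\ge 0$, $v_{r}\ge 0$), this reduces to verifying three additional constraints: the upper bound $\theta<1$, the sandwich $0<v\le v_{\max}$, and $v_{r}\le v$. I would pick an initial condition in $BS$, assume by contradiction a first exit time $t^{\ast}$, and show that one of the ODE equations rules it out.

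For $\theta<1$ the key observation is that $1-\theta_{1}u(t)\le 1$, which yields the differential inequality $d\theta/dt \le \alpha(t,\theta)(1-\theta)$. Gronwall-type integration then gives $1-\theta(t)\ge \bigl(1-\theta(0)\bigr)\exp\!\left(-\int_{0}^{t}\alpha(s,\theta(s))\,ds\right)>0$ for all finite $t$. For $v\le v_{\max}$ I would evaluate the second equation at $v=v_{\max}$: using $\eta(t)\in(0,\theta_{2}]$ and $\theta\in[0,1)$ we have $(1-\theta)\eta\le\theta_{2}$, hence $dv/dt\le 0$ there, so $v$ cannot cross $v_{\max}$. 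The strict positivity $v>0$ follows by the symmetric argument at the opposite face: as $v\to 0^{+}$ the second equation gives $dv/dt\to\beta(t,\theta)>0$.

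The main obstacle will be the face $\{v_{r}=v\}$. At any such point the third equation gives $dv_{r}/dt=\gamma(t,\theta)(1-v_{r}/v)=0$, so with $w=v-v_{r}$ one has $dw/dt\big|_{w=0}=dv/dt$. The Nagumo condition therefore requires $dv/dt\ge 0$ on the entire face, which is \emph{not} automatic from the equations: indeed $dv/dt<0$ precisely when $v>(1-\theta)\eta v_{\max}/\theta_{2}$, and a priori this region intersects $\{v_{r}=v\}$. My first attempt would be to show by a parallel boundary analysis that the auxiliary bound $v\le (1-\theta)\eta v_{\max}/\theta_{2}$ is itself preserved on trajectories starting in $BS$, which would immediately force $dv/dt\ge 0$ on the critical face and close the proof. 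Failing that, I would freeze $\theta(t)$ and $v(t)$, treat the third equation as a scalar linear ODE in $v_{r}$ with integrating factor $\exp\!\bigl(\int_{0}^{t}\gamma/v\bigr)$, and try to dominate the resulting integral representation of $v_{r}(t)$ by $v(t)$ using $v_{r}(0)\le v(0)$. This last face is the only delicate step; the other five boundaries are standard sign checks on the right-hand side.
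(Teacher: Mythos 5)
Your handling of five of the six faces is sound and, on the faces $\left\{\theta=1\right\}$ and $\left\{v=0\right\}$, actually more careful than the paper's own argument: the paper performs only pointwise sign checks of the inner product of the vector field with outward normals $(1,0,0)$, $(0,1,0)$, $(0,-1,1)$, and its face $F_{1}=\left\{(\theta,v,v_{r})\in BS;\theta=1\right\}$ is in fact empty since $BS$ requires $\theta<1$; your Gronwall bound $1-\theta(t)\geq\left(1-\theta(0)\right)\exp\left(-\int_{0}^{t}\alpha\right)>0$ is the right way to see that $\theta$ never reaches $1$. Your checks at $v=v_{\max}$ and $v\to 0^{+}$ coincide with (or supply what is missing from) the paper's.

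The face $\left\{v_{r}=v\right\}$ is where your proposal stalls, and there you have put your finger on a genuine defect of the paper rather than missed an idea. The paper's proof at this face states only that $dv_{r}/dt=0$ and concludes invariance, silently discarding the $-dv/dt$ contribution to $\left\langle (0,-1,1),\dot{X}\right\rangle$ --- exactly the term you identify. Neither of your repair attempts can succeed: the auxiliary region $v\leq(1-\theta)\eta v_{\max}/\theta_{2}$ is not itself invariant (the bound shrinks as $\theta$ grows and as $\eta(t)$ varies, so $v$ can meet it from below while it is still moving down), and the integrating-factor representation gives $w(t)=v(t)-v_{r}(t)=e^{-\int_{0}^{t}\gamma/v}w(0)+\int_{0}^{t}e^{-\int_{s}^{t}\gamma/v}\,\dot{v}(s)\,ds$, which is negative for small $t>0$ whenever $w(0)=0$ and $\dot{v}<0$ initially. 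Since the paper itself observes that $\dot{v}<0$ as soon as $v$ exceeds the limiting volume $(1-\theta)\eta v_{\max}/\theta_{2}$, which can be strictly below $v_{\max}$, such initial data do lie in $BS$, and the inequality $v_{r}\leq v$ then fails for small positive times. So your proof cannot be completed as the statement stands; it would require an added hypothesis (for instance that $\dot{v}\geq0$ wherever $v_{r}=v$, or that trajectories remain in the region $v\leq(1-\theta)\eta v_{\max}/\theta_{2}$). What you have found is an error in the paper's proof of the third face, not a gap in your own reasoning about the other five.
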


\begin{proof}
We will show that at each point of the boundary of $BS$, the system $\left(
\ref{EqGeneral2}\right)  $ returns into $BS$. We prove this by showing that
the scalar product of the system time derivative with the normal vector $n$ at
each boundary point is nonpositive. It has been already shown that positive
orthant is positively invariant. Let
\[
F_{1}\equiv\left\{  \left(  \theta,v,v_{r}\right)  \in BS;\theta=1\right\}
\]%
\[
F_{2}\equiv\left\{  \left(  \theta,v,v_{r}\right)  \in BS;v=v_{\max}\right\}
\]%
\[
F_{3}\equiv\left\{  \left(  \theta,v,v_{r}\right)  \in BS;v_{r}=v\right\}
\]
For all points on $F_{1}$, $n$ can be choosen as $\left(  1,0,0\right)  $.
Since the control $u$ takes its value in $\left[  0,1\right]  $ which also
contains $\theta_{1}$, $\frac{d\theta}{dt}$ is negative and the result is
obtained. For all points on $F_{2}$, $n$ can be choosen as $\left(
0,1,0\right)  $. Thanks to definition of $\theta_{2}$ and $\eta$, $\frac
{dv}{dt}$ is negative and the result is obtained. For all points on $F_{3}$,
$n$ can be choosen as $\left(  0,-1,1\right)  $. $\frac{dv_{r}}{dt}$ is zero,
and consequently $F_{3}$ is positively invariant.
\end{proof}

The invariance of the set $F_{3}$ is biologically plausible, since once the
fruit is totally rotten it remain definitely in that state, the fruit is lost.
The set $BS$ is also reasonable for biological reasons: the inhibition rate is
bounded, the rotten volume is no larger than the total volume, and fungus
attack reduces the size of a mature fruit.

\subsection{Optimal control of the within-host model\label{WithinHostModel3}}

\qquad In this subsection we apply control to model (\ref{ModelIntraHote}),
which we repeat here for convenience:
\begin{equation}
\left\{
\begin{array}
[c]{l}%
d\theta/dt=\alpha\left(  t,\theta\right)  \left(  1-\theta/\left(
1-\theta_{_{1}}u\left(  t\right)  \right)  \right) \\
dv/dt=\beta\left(  t,\theta\right)  \left(  1-v\theta_{_{2}}/\left(  \left(
1-\theta\right)  \eta\left(  t\right)  v_{\max}\right)  \right) \\
dv_{r}/dt=\gamma\left(  t,\theta\right)  \left(  1-v_{r}/v\right)
\end{array}
\right.
\end{equation}
For the control problem we focus on the first equation. This equation is
controllable in $\left]  0,1\right[  $ since $\theta$ is continuous and
$1-\theta_{_{1}}u\left(  t\right)  $ is an asymptotic threshold which can be
set easily. Giving a time $T>0$ (for example the annual production duration)
we search for $u$ in $L_{loc}^{2}\left(
\mathbb{R}
_{+},\left[  0,1\right]  \right)  $ such that the following functional
(previously used in \cite{aldila,emvudu}) is minimized:
\[
J_{T}\left(  u\right)  =\int\nolimits_{0}^{T}\left(  ku^{2}\left(  t\right)
+\theta^{2}\left(  t\right)  \right)  dt+f\left(  \theta\left(  T\right)
\right)  ,
\]
where $k>0$ can be interpreted as the cost ratio related to the use of control
effort $u$. This functional reflects the fact that reducing inhibition rate
$\theta$ will lead to increased fruit production (larger volumes with a
relatively lower level of infection), while minimizing $u$ will reduce
financial and environmental costs. We use the squares of $u$ and $\theta$ in
the integrand because this choice facilitates the technical calculations
required for minimization.

We note in passing that we could had tried to minimize the more practically
relevant expression :
\begin{align*}
& \int\nolimits_{0}^{T}\left(  ku\left(  t\right)  +\theta\left(  t\right)
+\left(  v_{\max}-v\left(  t\right)  \right)  +v_{r}\left(  t\right)  \right)
dt\\
& \quad+\theta\left(  T\right)  +\left(  v_{\max}-v\left(  T\right)  \right)
+v_{r}\left(  T\right)
\end{align*}
However, an exact computation of this functional would require precise
expressions for $\alpha,\beta,\gamma,\eta$ in the system (\ref{ModelIntraHote}%
). As far as the authors know, there is no previous study which gives those
parameters. It seemed more advantageous to us to limit the random choice of
parameters, so that we could perform representative simulations.

We define the set
\[
U^{K}\equiv\left\{
\begin{array}
[c]{c}%
u\in C\left(  \left[  0,T\right]  ;\left[  0,1\right]  \right)  ;\forall
t,s\in\left[  0,T\right]  ,\text{ }\\
\left\vert u\left(  t\right)  -u\left(  s\right)  \right\vert \leq K\left\vert
t-s\right\vert
\end{array}
\right\}  ,
\]
which is nonempty for every $K\geq0$.

\begin{theorem}
Let $K\geq0$. There is a control $u^{\ast}\in U^{K}$ which minimizes the cost
$J_{T}$\textbf{.}
\end{theorem}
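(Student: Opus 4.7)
My plan is to apply the direct method from the calculus of variations. The key structural observation is that $U^K$ is a natural compactness class: its members map $[0,T]$ into the compact interval $[0,1]$ and are equi-Lipschitz with constant $K$, so by the Arzel\`a--Ascoli theorem $U^K$ is relatively compact in $C([0,T])$. Since both the pointwise bound and the Lipschitz constant pass to the limit under uniform convergence, $U^K$ is closed, hence compact in $(C([0,T]),\|\cdot\|_\infty)$.

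First I would observe that $J_T$ is bounded below on $U^K$ (the integrand is nonnegative, and $f$ is assumed bounded below, else one augments the theorem with this mild hypothesis), so $m:=\inf_{U^K} J_T$ is finite. Choose a minimizing sequence $(u_n)\subset U^K$ with $J_T(u_n)\to m$. By the compactness noted above, extract a subsequence (still written $u_n$) converging uniformly on $[0,T]$ to some $u^*\in U^K$.

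Second, I would establish continuous dependence of the first-equation trajectory $\theta_u$ on $u$ in the uniform topology. Writing the $\theta$-equation in \eqref{ModelIntraHote} as $d\theta/dt=G(t,\theta,u)$, note that by the positive invariance of $BS$ we have $0\le\theta_u<1$ along the whole trajectory, and the bound $u\le 1$ together with $\theta_1<1$ keeps the denominator $1-\theta_1 u$ uniformly away from zero. On the resulting compact parameter range, $G$ is locally Lipschitz in $(\theta,u)$ thanks to hypotheses $(H2)$ and $(H3)$ on the underlying $F$, and a standard Gronwall estimate applied to $\theta_{u_n}-\theta_{u^*}$ yields $\|\theta_{u_n}-\theta_{u^*}\|_\infty \le C\,\|u_n-u^*\|_\infty \longrightarrow 0$.

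Third, with $u_n\to u^*$ and $\theta_n:=\theta_{u_n}\to\theta^*:=\theta_{u^*}$ both uniformly on $[0,T]$, I pass to the limit in the cost. Dominated convergence gives $\int_0^T(ku_n^2+\theta_n^2)\,dt\to\int_0^T(k(u^*)^2+(\theta^*)^2)\,dt$, and continuity of $f$ at $\theta^*(T)$ gives $f(\theta_n(T))\to f(\theta^*(T))$. Hence $J_T(u^*)=\lim_n J_T(u_n)=m$, proving that $u^*$ realizes the infimum. The main obstacle I expect is the second step: one must carefully exploit the invariance of $BS$ and the separation $1-\theta_1 u\ge 1-\theta_1>0$ to avoid the singularity in the $\theta$-equation, so that the Lipschitz constant controlling the Gronwall estimate is uniform over the whole minimizing sequence; once that uniformity is in hand, every other step is routine.
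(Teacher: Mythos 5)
Your proposal follows essentially the same route as the paper: take a minimizing sequence in $U^{K}$, use Arzel\`a--Ascoli (boundedness plus the uniform Lipschitz bound) to extract a uniformly convergent subsequence with limit $u^{\ast}\in U^{K}$, and conclude by continuity of $J_{T}$ with respect to $u$. The only difference is that you supply the justification of that continuity (the Gronwall estimate for $\theta_{u_{n}}-\theta_{u^{\ast}}$, using the invariance of $BS$ and the bound $1-\theta_{1}u\geq 1-\theta_{1}>0$), which the paper simply asserts; this is a correct and welcome filling-in of the same argument.
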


\begin{proof}
Since $J_{T}\geq0$ it is bounded below. Let the infinimum be $J^{\ast}$, and
let $\left(  u_{n}\right)  _{n\in%
\mathbb{N}
}$ be a sequence in $U^{K}$ such that $\left(  J_{T}\left(  u_{n}\right)
\right)  _{n\in%
\mathbb{N}
}$ converges to $J^{\ast}$. The definition of $U^{K}$ implies that $\left(
u_{n}\right)  _{n\in%
\mathbb{N}
}$ is bounded and uniformly equicontinuous on $\left[  0,T\right]  $. By the
Ascoli theorem, there is a subsequence $\left(  u_{n_{k}}\right)  $ which
converges to a control $u^{\ast}$. Since the cost function is continuous with
respect to $u$ it follows that $J_{T}\left(  u^{\ast}\right)  =J^{\ast}. $
\end{proof}

\begin{theorem}
Suppose that $\alpha$ depends only on time. If there is an optimal control
strategy $u$ which minimizes $J_{T}$, then $u$ is unique and satisfies
\begin{equation}
u\left(  t\right)  =\left\{
\begin{array}
[c]{l}%
1\text{ when }27\alpha\theta_{_{1}}^{2}\theta p\geq8k\\
\frac{w_{3}\left(  t\right)  -1}{\theta_{_{1}}w_{3}\left(  t\right)  }\text{
when }27\alpha\theta_{_{1}}^{2}\theta p<8k
\end{array}
\right. \label{Optimalu}%
\end{equation}
where $w_{3}\left(  t\right)  $ is the element of $\left[  1,\min\left\{
3/2,1/\left(  1-\theta_{_{1}}\right)  \right\}  \right]  $ which is the
nearest to the smallest nonnegative solution of the equation $\alpha
\theta_{_{1}}^{2}\theta pw^{3}-2kw+2k=0$ and
\begin{equation}
\left\{
\begin{array}
[c]{l}%
d\theta/dt=\alpha\left(  t,\theta\right)  \left(  1-\theta/\left(
1-\theta_{_{1}}u\left(  t\right)  \right)  \right) \\
dp/dt=\alpha\left(  t\right)  p\left(  t\right)  /\left(  1-\theta_{_{1}%
}u\left(  t\right)  \right)  -2\theta\\
\theta\left(  0\right)  =\theta_{0}\text{, }\theta\left(  T\right)
=\theta_{T}\text{, }p(T)=f^{^{\prime}}\left(  \theta_{T}\right)
\end{array}
\right. \label{OptimumSystemOde1}%
\end{equation}

\end{theorem}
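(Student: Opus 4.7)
The plan is to apply Pontryagin's Maximum Principle, reduce the optimality conditions to a pointwise minimization of the Hamiltonian in $u$, and then analyze the resulting algebraic condition under a natural change of variables. Since $J_T$ depends only on $\theta$ and $u$ and the $\theta$-dynamics in (\ref{ModelIntraHote}) decouples from $(v,v_r)$, this is a scalar optimal control problem with Hamiltonian
\[
H(t,\theta,p,u)=ku^{2}+\theta^{2}+p\,\alpha(t,\theta)\Bigl(1-\frac{\theta}{1-\theta_{1}u}\Bigr).
\]
The canonical state equation reproduces the first line of (\ref{OptimumSystemOde1}), and the adjoint equation $\dot p=-\partial_\theta H$ together with the transversality condition $p(T)=f'(\theta(T))$ reproduces the other two. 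The hypothesis ``$\alpha$ depends only on $t$'' is used precisely here, since it kills the $\partial_\theta\alpha$ contribution and leaves the simplified adjoint $\dot p=\alpha(t)p/(1-\theta_1 u)-2\theta$ displayed in the statement.

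Next, for the pointwise minimization of $u\mapsto H$ on $[0,1]$, the condition $\partial_u H=0$ becomes $2ku(1-\theta_1 u)^{2}=\alpha\theta_1\theta p$. I would introduce the substitution $w=1/(1-\theta_1 u)$, with inverse $u=(w-1)/(\theta_1 w)$, which maps $u\in[0,1]$ bijectively onto $w\in[1,1/(1-\theta_1)]$ and rationalises the singular term $1/(1-\theta_1 u)$. It converts the stationarity equation into the cubic
\[
\alpha\theta_{1}^{2}\theta p\,w^{3}-2kw+2k=0
\]
of the statement.

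Setting $A:=\alpha\theta_{1}^{2}\theta p\ge 0$ and $g(w):=Aw^{3}-2kw+2k$, the only positive critical point of $g$ is $w_\star=\sqrt{2k/(3A)}$, with value $g(w_\star)=2k(1-\tfrac{2}{3}w_\star)$; this value changes sign exactly at $w_\star=3/2$, equivalently at $27A=8k$. Hence when $27\alpha\theta_{1}^{2}\theta p\ge 8k$ one has $g\ge 0$ on $[0,\infty)$, so $\partial_u H$ does not vanish on $[0,1]$ and $H$ is strictly decreasing there, forcing $u^{\ast}=1$. Otherwise a direct evaluation gives $g(1)=A>0$ and $g(3/2)=\tfrac{27}{8}A-k<0$, so the smallest nonnegative root of $g$ lies in $(1,3/2)$; clipping it to $[1,\min\{3/2,1/(1-\theta_1)\}]$ produces $w_3(t)$, and inverting the substitution yields formula (\ref{Optimalu}). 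The second-order test $H_{uu}=2(k-Aw^{3})$ is nonnegative at a root of $g$ iff $w\le 3/2$, which confirms that $w_3$ is a minimum and also explains the upper bound $3/2$ in the definition; uniqueness on the admissible interval then follows from the strict monotonicity of $g$ on $[0,w_\star]$.

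The main obstacle I anticipate is the sign control required for the cubic analysis: the coefficient $A$ must be nonnegative throughout $[0,T]$. Nonnegativity of $\theta$ is provided by the positive invariance of $BS$, but nonnegativity of the costate $p$ must be extracted from the adjoint ODE with its transversality condition, which requires the mild assumption $f'\ge 0$ (a monotone terminal penalty on the inhibition rate). One must also justify measurability of the feedback law (\ref{Optimalu}), which follows from the continuous dependence of $w_3(t)$ on $(\theta,p)$ via the implicit function theorem away from the degenerate threshold $27A=8k$.
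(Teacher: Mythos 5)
Your proposal is correct and follows essentially the same route as the paper: Pontryagin's maximum principle, the substitution $w=1/(1-\theta_{1}u)$, the cubic $\alpha\theta_{1}^{2}\theta p\,w^{3}-2kw+2k=0$, and the case split at $27\alpha\theta_{1}^{2}\theta p=8k$. In fact your analysis of the cubic (locating the critical point $w_\star=\sqrt{2k/(3A)}$, checking $g(1)>0$ and $g(3/2)<0$, and the second-order test explaining the bound $3/2$) supplies details the paper merely asserts, and your caveats about the sign of $p$ and about uniqueness resting on the forward-backward system are legitimate points the paper also leaves implicit.
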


\begin{proof}
According the maximum principle, minimizing $J_{T}$ is equivalent to
minimizing the Hamiltonian functional
\begin{align*}
H\left(  t,\theta,u\right)   &  =ku^{2}\left(  t\right)  +\theta^{2}\left(
t\right)  +f\left(  \theta\left(  T\right)  \right) \\
&  +\alpha\left(  t\right)  p\left(  t\right)  \left(  1-\theta_{_{1}}u\left(
t\right)  -\theta\right)  /\left(  1-\theta_{_{1}}u\left(  t\right)  \right)
\end{align*}
where the adjoint state $p$ is the solution to the following problem
\begin{equation}
\left\{
\begin{array}
[c]{l}%
dp/dt=\alpha\left(  t\right)  p/\left(  1-\theta_{_{1}}u\left(  t\right)
\right)  -2\theta\\
p(T)=f^{^{\prime}}\left(  \theta\left(  T\right)  \right)
\end{array}
\right. \label{PBAdjoint}%
\end{equation}
To simplify the expression, let
\begin{equation}
\label{DefOfw}w\equiv1/\left(  1-\theta_{_{1}}u\right)  \in\left[  1,1/\left(
1-\theta_{_{1}}u\right)  \right] .
\end{equation}
Then the new equivalent functional to minimize is
\[
J_{T}^{1}\left(  w\right)  =\int\nolimits_{0}^{T}\left(  k\left(
\frac{w\left(  t\right)  -1}{\theta_{_{1}}w\left(  t\right)  }\right)
^{2}+\theta^{2}\left(  t\right)  \right)  dt+f\left(  \theta\left(  T\right)
\right)
\]
$\partial_{w}H=0$ if and only if
\begin{equation}
\label{partialH}\alpha\theta_{_{1}}^{2}\theta pw^{3}-2kw+2k=0.
\end{equation}
This equation has a unique nonpositive solution when $27\alpha\theta_{_{1}%
}^{2}\theta p\geq8k$. It has at least one nonnegative solution when
$27\alpha\theta_{_{1}}^{2}\theta p<8k$. We can choose $w$ in the following
way:
\[
w\left(  t\right)  =\left\{
\begin{array}
[c]{l}%
\frac{1}{1-\theta_{_{1}}}\text{ when }27\alpha\theta_{_{1}}^{2}\theta
p\geq8k\\
w_{3}\left(  t\right)  \text{ when }27\alpha\theta_{_{1}}^{2}\theta p<8k
\end{array}
\right.
\]
where $w_{3}\left(  t\right)  $ is the element of $\left[  1,\min\left\{
3/2,1/\left(  1-\theta_{_{1}}\right)  \right\}  \right]  $ that is the nearest
to the smallest nonnegative solution of (\ref{partialH}). It follows from the
definition of $w$ in (\ref{DefOfw}) and algebraic rearrangement that the
optimal control $u$ is given by (\ref{Optimalu}),
where $(p,\theta)$ is a solution to the system $\left(
\ref{OptimumSystemOde1}\right)  $. The uniqueness of $u$ follows from the
uniqueness of the solution of the system $\left(  \ref{OptimumSystemOde1}%
\right)  $.
\end{proof}

\subsection{Computer simulations of the controlled within-host
model\label{WithinHostModel4}}

We performed simulations in order to demonstrate the practical controllability
of the system, For these simulations we used an inhibition pressure of the
form
\[
\alpha\left(  t\right)  =a\left(  t-b\right)  ^{2}\left(  1-\cos\left(  2\pi
t/c\right)  \right)  ,
\]
with $b$ and $c$ in $\left[  0,1\right]  $. This function reflects the
seasonality of empirically-based severity index models found in the literature
\cite{danneberger,dodd,duthie}. The particular values used in the simulation
were $a=4,b=0.75$, $c=0.2$ and $k=1$. We also took $f\left(  \theta\left(
T\right)  \right)  =\theta\left(  T\right)  $, so that $p\left(  T\right)
=f^{\prime}\left(  \theta\left(  T\right)  \right)  =1$. In this case and the
shooting method can be used to numerically estimate the value of $p_{0}$ which
produces a solution to $\left(  \ref{PBAdjoint}\right)  $.

\begin{center}
\bigskip%

{\parbox[b]{3.2076in}{\begin{center}
\includegraphics[
natheight=3.533600in,
natwidth=4.675200in,
height=2.4284in,
width=3.2076in
]%
{N2OTHI00.jpg}%
\\
$\mathbf{Fig1}$: Optimum control effort over a one-year period $\theta
_{0}=0.2,\theta_{1}=1-0.4$.
\end{center}}}
%

{\parbox[b]{3.1756in}{\begin{center}
\includegraphics[
natheight=3.533600in,
natwidth=4.684700in,
height=2.3999in,
width=3.1756in
]%
{N2OTMV01.jpg}%
\\
$\mathbf{Fig2}$: Evolution of inhibition rate over a one-year period with
$\theta_{0}=0.2,\theta_{1}=1-0.4$.
\end{center}}}
%

{\parbox[b]{3.2906in}{\begin{center}
\includegraphics[
natheight=3.533600in,
natwidth=4.675200in,
height=2.4924in,
width=3.2906in
]%
{N2OTP202.jpg}%
\\
$\mathbf{Fig3}$: Optimum control effort over a one-year period $\theta
_{0}=0.5,\theta_{1}=1-0.4$.
\end{center}}}
%

{\parbox[b]{3.3641in}{\begin{center}
\includegraphics[
natheight=3.543100in,
natwidth=4.684700in,
height=2.5486in,
width=3.3641in
]%
{N2OTPQ03.jpg}%
\\
$\mathbf{Fig4}$: Evolution of inhibition rate over a one-year period
$\theta_{0}=0.5,\theta_{1}=1-0.4$.
\end{center}}}

\end{center}

The above figures show how the control strategy adapts itself in response to
inhibition pressure represented by $\alpha$. Figures 1-2 correspond to the
case of low initial effective inhibition rate ($\theta_{0}=0.2$, corresponding
to a low initial level of infection), while Figures 3-4 correspond to the case
of high initial effective inhibition rate ($\theta_{0}=0.5$). The simulations
also show in different cases the effectiveness of the optimal strategy as
compared to taking no control action. Regardless of whether the initial
inhibition rate is above or below the threshold $1-\theta_{1}$, the dynamics
are sensitive to the control effort.

\section{A diffusion model\label{sec:diffusion}}

In this section, we will model the geographical spread of the disease via
diffusive factors such as the movement of inoculum through ground water and wind.

\subsection{Specification of the diffusion model\label{DiffusionModel1}}

We include the effect of diffusive factors on the spread of infection by
adding a diffusion term to the within-host equation for $\theta$ from system
(\ref{ModelIntraHote}). Together with boundary conditions, the model is
\begin{align}
&  \partial\theta/\partial t=\alpha\left(  t,x,\theta\right)  \left(
1-\theta/\left(  1-\theta_{_{1}}u\left(  t,x\right)  \right)  \right)
\nonumber\\
&  \qquad\quad~~+\operatorname{div}\left(  A\left(  t,x,\theta\right)
\nabla\theta\right)  ~~\text{ on }%
\mathbb{R}
_{+}^{\ast}\times\Omega,\label{ModelIntraHoteDiff1}\\
&  \left\langle A\left(  t,x,\theta\right)  \nabla\theta,n\right\rangle
=0\qquad\quad\quad\text{ on }%
\mathbb{R}
_{+}^{\ast}\times\partial\Omega,\label{ModelIntraHoteDiff2}\\
&  \theta\left(  0,x\right)  =\theta_{0}\left(  x\right)  \geq0\qquad
\qquad\quad x\in\overline{\Omega},\label{ModelIntraHoteDiff3}%
\end{align}
where $\Omega$ is an open bounded subset of $%
\mathbb{R}
^{3}$ with a continuously differentiable boundary $\partial\Omega$, and
$\theta_{_{1}}\in\left[  0,1\right[  $. For a given element $\left(
t,x,\theta\right)  $, $A$ is a $3\times3$-matrix $\left(  a_{ij}\left(
t,x,\theta\right)  \right)  $. The functions $\alpha$ and $a_{ij}$ are assumed
to be nonnegative.\textbf{\ }Since $\theta$ depends on $\left(  t,x\right)  $,
the functions $\alpha$ and $a_{ij}$ can be identified with elements of the set
$C\left(  \left[  0,T\right]  ;H^{1}\left(  \Omega\right)  \right)  $. The
function $u\in C\left(  \left[  0,T\right]  ;H^{1}\left(  \Omega\right)
\right)  $ designates the control, which takes its values in the set $\left[
0,1\right]  $. $\left(  \ref{ModelIntraHoteDiff2}\right)  $ may be interpreted
as a dependence of the flux of inoculum with respect to diffusion factors. In
particular, when $A$ is the identity matrix $\left(
\ref{ModelIntraHoteDiff12}\right)  $ could be interpreted as there is no flux
between exterior and interior of the domain $\Omega$.

Practically, time can be subdivided into intervals on which parameters are
approximately constant. We may thus study the system on each interval
separately, and assume that all parameters are constant. We also assume that
functions $\alpha$ and $A$ do not depend on $\theta$. This leads to the
following simplified model,%

\begin{align}
& \partial\theta/\partial t =\alpha\left(  x\right)  \left(  1-\theta/\left(
1-\theta_{_{1}}u\left(  x\right)  \right)  \right) \nonumber\\
& \qquad\quad~~+\operatorname{div} \left(  A\left(  x\right)  \nabla
\theta\right)  ~~ \text{ on }\left]  0,T\right[  \times\Omega
,\label{ModelIntraHoteDiff12}\\
& \left\langle A\left(  x\right)  \nabla\theta,n\right\rangle =0\qquad\qquad~
\text{ on } \left]  0,T\right[  \times\partial\Omega
,\label{ModelIntraHoteDiff22}\\
& \theta\left(  0,x\right)  =\theta_{0}\left(  x\right)  \geq0\qquad\qquad
x\in\overline{\Omega},\label{ModelIntraHoteDiff32}%
\end{align}

In order to formalize the model, we define the Hilbert space
\[
E=\left\{  \theta\in H^{2}\left(  \Omega\right)  ;\text{ }\theta\text{
satisfies }\left(  \ref{ModelIntraHoteDiff22}\right)  \right\}
\]
provided with the inner product
\[
\left\langle f,g\right\rangle _{E}=\int\nolimits_{U}\left(  fg+\left\langle
\nabla f,\nabla g\right\rangle +\Delta f.\Delta g\right)  dx
\]
Define also the linear unbounded operator $\pounds :D\left(  \pounds \right)
=E\subset L^{2}\left(  \Omega\right)  \rightarrow L^{2}\left(  \Omega\right)
$ as
\[
\pounds \theta=\frac{\alpha\theta}{1-\theta_{_{1}}u}-\operatorname{div}\left(
A\left(  x\right)  \nabla\theta\right)
\]
Then equation $\left(  \ref{ModelIntraHoteDiff12}\right)  $ takes the
following form
\begin{equation}
\partial\theta/\partial t+\pounds \theta=\alpha.\label{ModelIntraHoteDiff12A}%
\end{equation}
We also introduce the following condition, which we will use to ensure that
the system has realistic solutions: \medskip

$\left(  H4\right)  $ There exists a constant $C>0$ such that for almost every
$x\in\Omega$, $A\left(  x\right)  $ is symmetric, positive definite and
\[
\left\langle v,A\left(  x\right)  v\right\rangle \geq C\left\langle
v,v\right\rangle ,\text{ }\forall v.
\]

\subsection{Well-posedness of the diffusion model\label{DiffusionModel2}}

We are now ready to prove that our model has been a mathematically and
epidemiologically well-posed. In other words, we show that exists a unique
solution $0 \le\theta(t,x) \le$ to the system $\left(
\ref{ModelIntraHoteDiff12}\right)  -\left(  \ref{ModelIntraHoteDiff32}\right)
$. This shall follow from the Hille-Yosida theorem\footnote{See \cite{brezis}
p 185.}: but first we need the following proposition.

\begin{proposition}
\label{PropMaxMon}If $A\left(  x\right)  $ is a positive semidefinite bilinear
form for almost every $x\in\Omega$, then the linear operator $\pounds $ is
monotone on $E$. Moreover, if $A\left(  x\right)  $ satisfies ($H4$), then
$\pounds $ is maximal.
\end{proposition}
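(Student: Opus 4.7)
The plan is to unpack the two definitions. Recall that $\pounds$ is monotone on $E \subset L^{2}(\Omega)$ if $\langle \pounds \theta,\theta\rangle_{L^{2}}\geq 0$ for every $\theta\in E$, and maximal monotone if in addition $R(I+\pounds)=L^{2}(\Omega)$, i.e., for every $f\in L^{2}(\Omega)$ there exists $\theta\in E$ with $\theta+\pounds\theta=f$. I would handle these two properties in sequence.

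For monotonicity, I would directly compute
\[
\langle\pounds\theta,\theta\rangle_{L^{2}} \;=\; \int_{\Omega}\frac{\alpha\,\theta^{2}}{1-\theta_{1}u}\,dx \;-\; \int_{\Omega}\theta\,\operatorname{div}(A\nabla\theta)\,dx.
\]
The first integrand is nonnegative because $\alpha\geq 0$, $\theta_{1}\in[0,1)$ and $u\in[0,1]$ imply $1-\theta_{1}u\geq 1-\theta_{1}>0$. For the second term I would integrate by parts: the boundary term $\int_{\partial\Omega}\theta\langle A\nabla\theta,n\rangle\,dS$ vanishes because $\theta\in E$ satisfies the Neumann-type condition (\ref{ModelIntraHoteDiff22}), leaving $\int_{\Omega}\langle\nabla\theta,A\nabla\theta\rangle\,dx\geq 0$ by the positive semidefiniteness hypothesis on $A(x)$. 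This yields $\langle\pounds\theta,\theta\rangle_{L^{2}}\geq 0$.

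For maximality, given $f\in L^{2}(\Omega)$ I would produce $\theta\in E$ solving $\theta+\pounds\theta=f$ via the Lax--Milgram theorem applied to the bilinear form
\[
a(\theta,\varphi)=\int_{\Omega}\!\Bigl(1+\tfrac{\alpha}{1-\theta_{1}u}\Bigr)\theta\varphi\,dx+\int_{\Omega}\langle A\nabla\theta,\nabla\varphi\rangle\,dx
\]
on $H^{1}(\Omega)\times H^{1}(\Omega)$, with linear functional $\varphi\mapsto\int_{\Omega}f\varphi\,dx$. Continuity follows from the bounds $0\leq\alpha/(1-\theta_{1}u)\leq \|\alpha\|_{\infty}/(1-\theta_{1})$ and the $L^{\infty}$-bound on $A$, combined with Sobolev embedding to handle the cross-products. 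Coercivity is where (H4) enters decisively: the kinetic term dominates $\|\theta\|_{L^{2}}^{2}$, and (H4) gives $\int_{\Omega}\langle A\nabla\theta,\nabla\theta\rangle\,dx\geq C\|\nabla\theta\|_{L^{2}}^{2}$, so $a(\theta,\theta)\geq\min(1,C)\,\|\theta\|_{H^{1}}^{2}$. Lax--Milgram then yields a unique weak solution $\theta\in H^{1}(\Omega)$.

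The final step, and I expect it to be the subtle one, is to promote the weak solution to an element of $D(\pounds)=E$. For this I would appeal to standard elliptic regularity for the Neumann problem on a domain with $C^{1}$ boundary (as assumed for $\Omega$): since the right-hand side $f-(1+\alpha/(1-\theta_{1}u))\theta$ lies in $L^{2}(\Omega)$ and the principal part $-\operatorname{div}(A\nabla\cdot)$ is uniformly elliptic by (H4), one obtains $\theta\in H^{2}(\Omega)$ together with the co-normal boundary condition $\langle A\nabla\theta,n\rangle=0$ in the trace sense, so $\theta\in E$. The main obstacle here is ensuring enough regularity of the coefficients $A$, $\alpha$ and $u$ for the $H^{2}$-regularity theorem to apply; under the working assumption that $A$ is (at least) Lipschitz in $x$ and $\alpha,u$ are bounded, this goes through, and combined with monotonicity it establishes that $\pounds$ is maximal monotone.
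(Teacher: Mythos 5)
Your proposal matches the paper's proof essentially step for step: monotonicity via integration by parts using the boundary condition and positive semidefiniteness of $A$, and maximality via Lax--Milgram applied to the same bilinear form (the paper writes the zeroth-order coefficient as $(1+\alpha-\theta_{1}u)/(1-\theta_{1}u)$, which is exactly your $1+\alpha/(1-\theta_{1}u)$) followed by elliptic regularity (the paper cites Theorem 9.26 of Brezis) to land in $E$. Your treatment is, if anything, slightly more explicit about coercivity and the regularity hypotheses on the coefficients, but the route is the same.
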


\begin{proof}
\begin{enumerate}
\item[$\left(  ii\right)  $] To show $\pounds $ is monotone, we let $\theta\in
E$ and compute:
\begin{align*}
&  \int\nolimits_{\Omega}\pounds \theta\times\theta\,dx\\
&  =\int\nolimits_{\Omega}\left(  \alpha\theta^{2}/\left(  1-\theta_{_{1}%
}u\right)  \right)  dx-\int\nolimits_{\Omega}\operatorname{div}\left(
A\left(  x\right)  \nabla\theta\right)  \theta dx\\
&  =\left(  1/\left(  1-\theta_{_{1}}u\right)  \right)  \left\Vert \theta
\sqrt{\alpha}\right\Vert _{L^{2}\left(  \Omega\right)  }^{2}-\int
\nolimits_{\Omega}\operatorname{div}\left(  A\left(  x\right)  \nabla
\theta\right)  \theta dx\\
&  \geq-\int\nolimits_{\partial\Omega}\left\langle A\left(  x\right)
\nabla\theta,n\left(  x\right)  \right\rangle \theta dx+\int\nolimits_{\Omega
}\left\langle A\left(  x\right)  \nabla\theta,\nabla\theta\right\rangle dx\\
&  \geq0.
\end{align*}

\item[$\left(  ii\right)  $] To show $\pounds $ is maximal , we let $f\in
L^{2}\left(  \Omega\right)  $ and seek $\theta\in E$ such that $\theta
+\pounds \theta=f$. Given $\varphi\in E$, we have
\begin{align*}
&  \int\nolimits_{\Omega}\left(  \theta+\pounds \theta\right)  \times\varphi
dx\\
&  =\int\nolimits_{\Omega}\varphi\theta\left(  1+\alpha-\theta_{_{1}}u\right)
/\left(  1-\theta_{_{1}}u\right)  dx\\
&  -\int\nolimits_{\Omega}\operatorname{div}\left(  A\left(  x\right)
\nabla\theta\right)  \varphi dx\\
&  =\int\nolimits_{\Omega}\varphi\theta\left(  1+\alpha-\theta_{_{1}}u\right)
/\left(  1-\theta_{_{1}}u\right) \\
&  +\int\nolimits_{\Omega}\left\langle A\left(  x\right)  \nabla\theta
,\nabla\varphi\right\rangle dx\\
&  -\int\nolimits_{\partial\Omega}\left\langle A\left(  x\right)  \nabla
\theta,n\left(  x\right)  \right\rangle \varphi dx\\
&  =\int\nolimits_{\Omega}\varphi\theta\left(  1+\alpha-\theta_{_{1}}u\right)
/\left(  1-\theta_{_{1}}u\right) \\
&  +\int\nolimits_{\Omega}\left\langle A\left(  x\right)  \nabla\theta
,\nabla\varphi\right\rangle dx\\
&  \equiv p\left(  \theta,\varphi\right) ,
\end{align*}
where $p$ is a symmetric continuous and coercive bilinear form on
$H^{1}\left(  \Omega\right)  $. The Lax-Milgram theorem\footnote{See
\cite{brezis}} implies that there is a unique $\theta\in H^{1}\left(
\Omega\right)  $ such that $\theta+\pounds \theta=f$. Using regularization
methods similar those used in Theorem 9.26 of \cite{brezis}, it follows that
$\theta\in E$.
\end{enumerate}
\end{proof}

Given that the linear operator $\pounds $ is maximal monotone and $\theta_{0}$
is in $E$, then by the Hille-Yosida theorem there is a unique function
$\theta\in C^{1}\left(  \left[  0,T\right]  ;L^{2}\left(  \Omega\right)
\right)  \bigcap C\left(  \left[  0,T\right]  ;E\right)  $ which satisfies
$\left(  \ref{ModelIntraHoteDiff12}\right)  -\left(
\ref{ModelIntraHoteDiff32}\right)  $, and $\forall\left(  t,x\right)
\in\left[  0,T\right]  \times\Omega$ we have
\[
\theta\left(  t,x\right)  =\left(  S_{\pounds }\left(  t\right)  \theta
_{0}\right)  \left(  x\right)  +\int\nolimits_{0}^{t}\left(  S_{\pounds }%
\left(  t-s\right)  \alpha\right)  \left(  x\right)  ds,\text{ }%
\]
where $S_{\pounds }\left(  t\right)  $ is the contraction semigroup generated
by $-\pounds $.

Now that we have established existence and uniqueness of the solution $\theta
$, we now prove that $0 \le\theta(t,x) \le1$ for all $(t,x)$ in the domain.
assuming that $A(x)$ satisfies the condition ($H4$). We define
\begin{align*}
m &  \equiv\underset{\partial\Omega}{\inf}\,\theta_{0},\\
M &  \equiv\max\left\{  \underset{\partial\Omega}{\sup}\,\theta_{0}%
,~\underset{\Omega}{\sup}\left(  1-\theta_{_{1}}u\right)  \right\}  ,\\
v &  \equiv1/\left(  1-\theta_{_{1}}u\right)  .
\end{align*}
Note that $M \le1$ as long as $\theta_{0} \le1$ and $0 \le\theta_{1} u \le1$.

Let $E_{+}$ designate the set of elements in $E$ which are nonnegative almost
everywhere on $\overline{\Omega}$. The following theorem gives sufficient
conditions under which the solution $\theta$ of $\left(
\ref{ModelIntraHoteDiff12}\right)  -\left(  \ref{ModelIntraHoteDiff32}\right)
$\ is bounded by $M \le1$ and the positive cone $E_{+}$ of $E$ is positively invariant.

\begin{theorem}
\label{TheoMinMax}If $A\left(  x\right)  \nabla e^{tv\alpha}=0$ for every
$\left(  t,x\right)  \in$\ $\left[  0,T\right]  \times\Omega$ then for almost
every $x$ in $\Omega$,
\begin{equation}
m\leq e^{tv\alpha}\theta\left(  t,x\right)  \text{, }t\in\left[  0,T\right]
.\label{Min}%
\end{equation}
Moreover if $A\left(  x\right)  \nabla v=0$ for every $\left(  t,x\right)
\in$\ $\left[  0,T\right]  \times\Omega$, then
\begin{equation}
\theta\left(  t,x\right)  \leq M\label{Max}%
\end{equation}
In particular, $\left(  \ref{Min}\right)  $ and $\left(  \ref{Max}\right)  $
hold when $\alpha$ and $u$ do not depend on the space variable $x$.
\end{theorem}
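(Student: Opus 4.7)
The plan is to establish two parabolic comparison estimates: one for the transformed variable $w = e^{tv\alpha}\theta$, and one for the deviation $(\theta - M)^+$. Under the first hypothesis $A\nabla e^{tv\alpha} = 0$, the product rule gives $A\nabla w = \theta\, A\nabla e^{tv\alpha} + e^{tv\alpha} A\nabla\theta = e^{tv\alpha} A\nabla\theta$, so the divergence term transforms cleanly and the linear reaction term $v\alpha\theta$ is exactly absorbed into $\partial_t w$. A short computation yields
\[
\partial_t w - \operatorname{div}(A\nabla w) = \alpha\, e^{tv\alpha} \geq 0,
\]
together with the inherited Neumann condition $\langle A\nabla w, n\rangle = e^{tv\alpha}\langle A\nabla\theta,n\rangle = 0$ on $\partial\Omega$.

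Given this, I would apply a weak minimum principle. Testing against $-(w - m)^-$, integrating over $\Omega$, and integrating the diffusion term by parts using the Neumann condition, the boundary contribution drops out, the right-hand side is $\leq 0$, and the uniform ellipticity in $(H4)$ makes $\int_\Omega \langle A\nabla(w-m)^-, \nabla(w-m)^-\rangle\, dx \geq 0$. This gives $\tfrac{d}{dt}\|(w-m)^-\|_{L^2}^2 \leq 0$, and since $(w(0,\cdot)-m)^- = (\theta_0-m)^-=0$, I conclude $w \geq m$ a.e., which is (\ref{Min}).

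For (\ref{Max}), the additional hypothesis $A\nabla v = 0$ together with $A\nabla(v\alpha)=0$ forces $A\nabla\alpha = 0$ as well, since $A\nabla(v\alpha) = v A\nabla\alpha + \alpha A\nabla v$. The most direct route is then to test the original equation $\partial_t\theta + v\alpha\theta - \operatorname{div}(A\nabla\theta) = \alpha$ against $g := (\theta - M)^+$. The arithmetic point that closes the estimate is that by definition $M \geq \sup_\Omega(1-\theta_1 u) = \sup_\Omega 1/v$, so $vM \geq 1$ a.e., which makes $\int_\Omega(vM-1)\alpha g\, dx \geq 0$. Combining with ellipticity and the Neumann identity yields $\tfrac{d}{dt}\|g\|_{L^2}^2 \leq 0$, and since $g(0)=(\theta_0-M)^+=0$, we get $\theta \leq M$. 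The \emph{in particular} clause is then immediate, because spatial independence of $\alpha$ and $u$ kills $\nabla v$, $\nabla\alpha$, and hence $\nabla e^{tv\alpha}$ pointwise, so $A$ times any of these vanishes regardless of the ellipticity structure.

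The main technical obstacle I anticipate is justifying the use of $(w-m)^-$ and $(\theta-M)^+$ as admissible test functions within the Hille-Yosida solution class $C^1([0,T];L^2(\Omega)) \cap C([0,T];E)$, and verifying that the transformed equation for $w$ holds in a strong enough sense for the integration by parts to be legitimate. The standard workaround is to approximate the positive/negative-part maps by smooth convex functions $\Phi_\varepsilon$ with $\Phi_\varepsilon'\to \chi_{\{\cdot>0\}}$, perform the energy estimate at the regularized level, and pass to the limit $\varepsilon\to 0^+$; alternatively, one can approximate $\theta_0$ by elements in $E$ for which a classical solution exists and use continuous dependence.
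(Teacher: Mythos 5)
Your argument for the lower bound is, in substance, the paper's own: the authors' auxiliary functions $G$ and $H(s)=\int_0^s G$ and the functional $\varphi_1(t)=\int_\Omega H\bigl(m-e^{tv\alpha}\theta\bigr)\,dx$ are exactly the smoothed version of testing the equation for $w=e^{tv\alpha}\theta$ against $-(w-m)^-$, which is what you propose; the regularization by smooth convex truncations that you flag as the "technical workaround" is precisely how the paper implements the estimate, and your identity $A\nabla w=e^{tv\alpha}A\nabla\theta$ (using symmetry of $A$ from $(H4)$) matches their cancellation of the cross terms. For the upper bound you genuinely diverge: the paper keeps working with the transformed quantity $e^{tv\alpha}(\theta-1/v)$ and needs the extra hypothesis $A\nabla v=0$ to kill the term $\langle A\nabla\theta,\nabla v\rangle$ that appears when differentiating $\varphi_2$, whereas you test the \emph{untransformed} equation $\partial_t\theta-\operatorname{div}(A\nabla\theta)=\alpha(1-v\theta)$ against $(\theta-M)^+$ and close the estimate purely from $vM\geq 1$ on the set $\{\theta>M\}$ together with positive semidefiniteness of $A$. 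That argument is correct and in fact never uses $A\nabla v=0$ (nor the deduction $A\nabla\alpha=0$, which plays no role in your estimate and is a red herring), so your route proves the upper bound under weaker hypotheses and is arguably cleaner. One caveat you inherit from the paper rather than introduce: both proofs need $(\theta_0-m)^-=0$ and $(\theta_0-M)^+=0$ at $t=0$, i.e.\ $m\leq\theta_0\leq M$ throughout $\Omega$, while the paper defines $m$ and $M$ via the infimum and supremum of $\theta_0$ over $\partial\Omega$ only; you should state explicitly that you are reading these as bounds valid on all of $\Omega$.
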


\begin{proof}
Let $G\in C^{1}\left(
\mathbb{R}
\right)  $ such that

\begin{enumerate}
\item[$\left(  i\right)  $] $G\left(  s\right)  =0,$ $\forall s\leq0$, and

\item[$\left(  ii\right)  $] $0<G^{\prime}\left(  s\right)  \leq C,$ $\forall
s>0 $.
\end{enumerate}

Define
\begin{align*}
H\left(  s\right)   & \equiv\int\nolimits_{0}^{s}G\left(  \sigma\right)
d\sigma,\forall s\in\mathbb{R},\\
\varphi_{1}\left(  t\right)   & \equiv\int\nolimits_{\Omega}H\left(
m-e^{tv\alpha}\theta\left(  t,x\right)  \right)  dx,\\
\varphi_{2}\left(  t\right)   & \equiv\int\nolimits_{\Omega}H\left(
e^{tv\alpha}\left(  \theta\left(  t,x\right)  -1/v\right)  \right)  dx.
\end{align*}
We observe that
\begin{align*}
& \varphi_{1},\varphi_{2}\in C\left(  \left[  0,T\right]  ; \mathbb{R}
\right)  \bigcap C^{1}\left(  \left]  0,T\right]  ; \mathbb{R}\right)  ,\\
& \varphi_{1},\varphi_{2}\geq0 \text{ on } \left[  0,T\right] ,\\
& \varphi_{1}\left(  0\right)  =\varphi_{2}\left(  0\right)  =0.
\end{align*}
We may also compute
\begin{align*}
&  \varphi_{1}^{\prime}\left(  t\right) \\
& ~ =-\int\nolimits_{\Omega}e^{tv\alpha}G\left(  m-e^{tv\alpha}\theta\right)
\left(  v\alpha\theta+\partial\theta/\partial t\right)  dx\\
& ~ =-\int\nolimits_{\Omega}e^{tv\alpha}G\left(  m-e^{tv\alpha}\theta\right)
\left(  \alpha-\pounds \theta+v\alpha\theta\right)  dx\\
& ~ =-\int\nolimits_{\Omega}\alpha G\left(  m-e^{tv\alpha}\theta\right)  dx\\
& \qquad+\int\nolimits_{\Omega}\left\langle A\left(  x\right)  \nabla
\theta,\nabla e^{tv\alpha}G\left(  m-e^{tv\alpha}\theta\right)  \right\rangle
dx\\
& ~ =-\int\nolimits_{\Omega}\alpha G\left(  m-e^{tv\alpha}\theta\right)  dx\\
& \qquad-\int\nolimits_{\Omega}e^{2tv\alpha}G^{\prime}\left(  m-e^{tv\alpha
}\theta\right)  \left\langle A\left(  x\right)  \nabla\theta,\nabla
\theta\right\rangle dx\\
& \qquad+\int\nolimits_{\Omega}\left(  G\left(  m-e^{tv\alpha}\theta\right)
-e^{tv\alpha}\theta G^{\prime}\left(  m-e^{tv\alpha}\theta\right)  \right) \\
& \qquad\qquad~~\times\left\langle A\left(  x\right)  \nabla\theta,\nabla
e^{tv\alpha}\right\rangle dx\\
& ~ \leq0.
\end{align*}
Since $\varphi_{1}^{\prime}\leq0$ on $%
\mathbb{R}
_{+}^{\ast}$, $\varphi_{1}$ is identically zero on $%
\mathbb{R}
_{+}$ and therefore almost everywhere in $\Omega$.
\[
m\leq e^{tv\alpha}\theta\left(  t,x\right)
\]
If $A\left(  x\right)  \nabla v=0$ for every $\left(  t,x\right)  \in
$\ $\left[  0,T\right]  \times\Omega$, then
\begin{align*}
&  \varphi_{2}^{\prime}\left(  t\right) \\
&  =\int\nolimits_{\Omega}G\left(  e^{tv\alpha}\left(  \theta-1/v\right)
\right)  \left(  -\alpha+v\alpha\theta+\partial\theta/\partial t\right)  dx\\
&  =\int\nolimits_{\Omega}e^{tv\alpha}G\left(  e^{tv\alpha}\left(
\theta-1/v\right)  \right)  \left(  -\pounds \theta+v\alpha\theta\right)  dx\\
&  =-\int\nolimits_{\Omega}\left\langle A\left(  x\right)  \nabla\theta,\nabla
e^{tv\alpha}G\left(  e^{tv\alpha}\left(  \theta-1/v\right)  \right)
\right\rangle dx\\
&  =-\int\nolimits_{\Omega}e^{2tv\alpha}G^{\prime}\left(  e^{tv\alpha}\left(
\theta-1/v\right)  \right)  \left\langle A\left(  x\right)  \nabla
\theta,\nabla\theta\right\rangle dx\\
&  \quad~ -\int\nolimits_{\Omega}\left(  e^{2tv\alpha}/v^{2}\right)
G^{\prime}\left(  e^{tv\alpha}\left(  \theta-1/v\right)  \right)  \left\langle
A\left(  x\right)  \nabla\theta,\nabla v\right\rangle dx\\
&  \quad~ -\int\nolimits_{\Omega}G\left(  e^{tv\alpha}\left(  \theta
-1/v\right)  \right)  \left\langle A\left(  x\right)  \nabla\theta,\nabla
e^{tv\alpha}\right\rangle dx\\
&  \quad~ -\int\nolimits_{\Omega}e^{tv\alpha}\left(  \theta-1/v\right)
G^{\prime}\left(  e^{tv\alpha}\left(  \theta-1/v\right)  \right) \\
&  \qquad\qquad~~ \times\left\langle A\left(  x\right)  \nabla\theta,\nabla
e^{tv\alpha}\right\rangle dx\\
&  \leq0.
\end{align*}
Since $\varphi_{2}^{\prime}\leq0$ on $%
\mathbb{R}
_{+}^{\ast}$, $\varphi_{2}$ is identically zero on $%
\mathbb{R}
_{+}$ and therefore almost everywhere in $\Omega$
\[
\theta\left(  t,x\right)  \leq M.
\]

\end{proof}

The following theorem proves boundedness of $\theta$ under more general conditions.

\begin{theorem}
\label{TheoMinMax2}Suppose that $v$ and $\alpha v$ are increasing functions $h
$ and $g$ of the state $\theta$, and there is a constant $K>0$ such that
\begin{equation}
ag^{\prime}\left(  \theta\right)  \leq K\left(  1+ag^{\prime}\left(
\theta\right)  \right)  \exp\left(  ag\left(  \theta\right)  \right)  ,\text{
}\forall a\geq0.\label{CondMinMax}%
\end{equation}
Then for every time $t\in\left[  0,T\right]  $ and almost every $x$ in
$\Omega$,
\begin{equation}
m\leq e^{tv\alpha}\theta\left(  t,x\right) \label{Min2}%
\end{equation}
and
\begin{equation}
\theta\left(  t,x\right)  \leq M.\label{Max2}%
\end{equation}

\end{theorem}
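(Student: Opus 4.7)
The plan is to mimic the proof of Theorem \ref{TheoMinMax} with the identical auxiliary functions, and to use hypothesis (\ref{CondMinMax}) to absorb the extra terms that no longer vanish. I would take the same $G\in C^1(\mathbb{R})$ with $G\equiv 0$ on $(-\infty,0]$ and $0<G'\leq C$ on $(0,\infty)$, the same antiderivative $H(s)=\int_0^s G(\sigma)\,d\sigma$, and the same test quantities
\[
\varphi_1(t)\equiv\int_\Omega H\bigl(m-e^{tv\alpha}\theta(t,x)\bigr)\,dx,\qquad \varphi_2(t)\equiv\int_\Omega H\bigl(e^{tv\alpha}(\theta(t,x)-1/v)\bigr)\,dx.
\]
Both are nonnegative, vanish at $t=0$, and lie in $C^1((0,T];\mathbb{R})$, so it suffices to show $\varphi_i'\leq 0$ on $(0,T]$, after which (\ref{Min2}) and (\ref{Max2}) follow exactly as before.

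Differentiating under the integral sign, substituting $\partial_t\theta=\alpha-\pounds\theta$ from (\ref{ModelIntraHoteDiff12A}), and integrating by parts using the boundary condition (\ref{ModelIntraHoteDiff22}) would reproduce verbatim the computation of Theorem \ref{TheoMinMax}, except that the terms involving $\langle A(x)\nabla\theta,\nabla e^{tv\alpha}\rangle$ and $\langle A(x)\nabla\theta,\nabla v\rangle$ no longer drop out. The new assumption $v=h(\theta)$ and $v\alpha=g(\theta)$ lets me apply the chain rule to rewrite
\[
\nabla e^{tv\alpha}=t\,e^{tg(\theta)}g'(\theta)\nabla\theta,\qquad \nabla v=h'(\theta)\nabla\theta,
\]
so every such extra term becomes a scalar multiple of the nonnegative quadratic form $\langle A(x)\nabla\theta,\nabla\theta\rangle$, the same form that appears in the dominant negative contribution $-\int_\Omega e^{2tv\alpha}G'(\cdot)\langle A\nabla\theta,\nabla\theta\rangle\,dx$ inherited from the earlier proof.

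The main obstacle will be matching the algebraic form of these extra terms to condition (\ref{CondMinMax}) so the signs work out uniformly in $t\in[0,T]$. Specialised to $a=t$, (\ref{CondMinMax}) reads $tg'(\theta)\leq K(1+tg'(\theta))e^{tg(\theta)}$, which is exactly the bound needed to control the coefficients $tg'(\theta)e^{tg(\theta)}$ appearing in the extra terms by a linear combination of $1$ and $tg'(\theta)$ with a uniform constant. Combined with the fact that $G$, $G'$ and the relevant exponentials are uniformly bounded on the support of $G$ (where the arguments $m-e^{tv\alpha}\theta$ and $e^{tv\alpha}(\theta-1/v)$ are bounded), this lets me absorb the new positive pieces into the already-present $G'$-term, giving $\varphi_i'\leq 0$. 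Then $\varphi_i\equiv 0$, and by positivity of $G$ on $(0,\infty)$ the bounds (\ref{Min2}) and (\ref{Max2}) follow.
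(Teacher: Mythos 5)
Your overall strategy is the right one, and your treatment of $\varphi_{2}$ does go through: after expanding $\nabla e^{tv\alpha}=tg^{\prime}\left(  \theta\right)  e^{tg\left(  \theta\right)  }\nabla\theta$ and $\nabla v=h^{\prime}\left(  \theta\right)  \nabla\theta$, every extra term in $\varphi_{2}^{\prime}$ carries a manifestly nonnegative coefficient times $-\left\langle A\left(  x\right)  \nabla\theta,\nabla\theta\right\rangle$, so no absorption is needed there. The genuine gap is in $\varphi_{1}$. There the integration by parts produces the term $+\int_{\Omega}\left\langle A\left(  x\right)  \nabla\theta,\nabla\left(  e^{tv\alpha}G\left(  m-e^{tv\alpha}\theta\right)  \right)  \right\rangle dx$, whose expansion contains the \emph{positive} piece
\[
\int_{\Omega}tg^{\prime}\left(  \theta\right)  e^{tg\left(  \theta\right)  }\,G\left(  m-e^{tv\alpha}\theta\right)  \left\langle A\left(  x\right)  \nabla\theta,\nabla\theta\right\rangle dx ,
\]
with coefficient proportional to $G$, while the only negative piece available to dominate it has coefficient proportional to $G^{\prime}$ (evaluated at the same argument). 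A pointwise comparison of these two pieces therefore requires a pointwise relation between $G$ and $G^{\prime}$, and with the Theorem~\ref{TheoMinMax} choice of $G$ (only $0<G^{\prime}\leq C$ on $\left(  0,\infty\right)$) the ratio $G/G^{\prime}$ is completely uncontrolled. Your appeal to ``uniform boundedness of $G$, $G^{\prime}$ and the exponentials on the support of $G$'' cannot fix this: boundedness gives no sign information, and since $\left\langle A\left(  x\right)  \nabla\theta,\nabla\theta\right\rangle$ is an arbitrary nonnegative weight you really need the integrand itself to be nonpositive.

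The missing idea is to \emph{change the auxiliary function}: the paper requires, in addition to $G^{\prime}\leq C$, that $KG\left(  s\right)  \leq G^{\prime}\left(  s\right)$ for $s>0$, with the same constant $K$ as in (\ref{CondMinMax}). This converts the negative $G^{\prime}$-term into $-K\,e^{2tg\left(  \theta\right)  }\left(  1+tg^{\prime}\left(  \theta\right)  \right)  G\left(  \cdot\right)  \left\langle A\nabla\theta,\nabla\theta\right\rangle$, so that both pieces share the common factor $G\left(  \cdot\right)  \left\langle A\nabla\theta,\nabla\theta\right\rangle$ and the combined coefficient $\left(  tg^{\prime}\left(  \theta\right)  -K\left(  1+tg^{\prime}\left(  \theta\right)  \right)  e^{tg\left(  \theta\right)  }\right)  e^{tg\left(  \theta\right)  }$ is nonpositive precisely by (\ref{CondMinMax}) with $a=t$. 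In other words, the hypothesis (\ref{CondMinMax}) enters the proof only through this coupled choice of $G$; as written, your argument never actually uses the constant $K$ in a load-bearing way, and the bound (\ref{Min2}) is not established.
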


\begin{proof}
Let $G\in C^{1}\left(
\mathbb{R}
\right)  $ such that

\begin{enumerate}
\item[$\left(  i\right)  $] $G\left(  s\right)  =0,$ $\forall s\leq0$, and

\item[$\left(  ii\right)  $] $KG\left(  s\right)  \leq G^{\prime}\left(
s\right)  \leq C,$ $\forall s>0$.
\end{enumerate}

Using $\left(  \ref{CondMinMax}\right)  $ and the fact that operator $A$ is
monotone, we have
\begin{align*}
\left\langle A\left(  x\right)  \nabla\theta,\nabla v\right\rangle  &
=\left\langle A\left(  x\right)  \nabla\theta,\nabla h\left(  \theta\right)
\right\rangle \\
&  =h^{\prime}\left(  \theta\right)  \left\langle A\left(  x\right)
\nabla\theta,\nabla\theta\right\rangle \\
&  \geq0,
\end{align*}%
\begin{align*}
\left\langle A\left(  x\right)  \nabla\theta,\nabla e^{tw\alpha}\right\rangle
&  =\left\langle A\left(  x\right)  \nabla\theta,\nabla\exp\left(  tg\left(
\theta\right)  \right)  \right\rangle \\
&  =tg^{\prime}\left(  \theta\right)  \exp\left(  tg\left(  \theta\right)
\right)  \left\langle A\left(  x\right)  \nabla\theta,\nabla\theta
\right\rangle \\
&  \geq0,
\end{align*}
and
\begin{align*}
&  \left\langle A\left(  x\right)  \nabla\theta,\nabla e^{tv\alpha}G\left(
m-e^{tv\alpha}\theta\right)  \right\rangle \\
&  =\left\langle A\left(  x\right)  \nabla\theta,\nabla\exp\left(  tg\left(
\theta\right)  \right)  G\left(  m-\theta\exp\left(  tg\left(  \theta\right)
\right)  \right)  \right\rangle \\
&  =tg^{\prime}\left(  \theta\right)  \exp\left(  tg\left(  \theta\right)
\right)  G\left(  m-\theta\exp\left(  tg\left(  \theta\right)  \right)
\right)  \left\langle A\left(  x\right)  \nabla\theta,\nabla\theta
\right\rangle \\
&  -\exp\left(  2tg\left(  \theta\right)  \right)  \left(  1+tg^{\prime
}\left(  \theta\right)  \right)  G^{\prime}\left(  m-\theta\exp\left(
tg\left(  \theta\right)  \right)  \right)  \left\langle A\left(  x\right)
\nabla\theta,\nabla\theta\right\rangle \\
&  \leq\left(  tg^{\prime}\left(  \theta\right)  -K\left(  1+tg^{\prime
}\left(  \theta\right)  \right)  \exp\left(  tg\left(  \theta\right)  \right)
\right)  \exp\left(  tg\left(  \theta\right)  \right) \\
&  \times G\left(  m-\theta\exp\left(  tg\left(  \theta\right)  \right)
\right)  \left\langle A\left(  x\right)  \nabla\theta,\nabla\theta
\right\rangle \\
&  \leq0.
\end{align*}
Define
\begin{align*}
H\left(  s\right)   & \equiv\int\nolimits_{0}^{s}G\left(  \sigma\right)
d\sigma,\forall s\in\mathbb{R},\\
\varphi_{1}\left(  t\right)   & \equiv\int\nolimits_{\Omega}H\left(
m-e^{tv\alpha}\theta\left(  t,x\right)  \right)  dx,\\
\varphi_{2}\left(  t\right)   & \equiv\int\nolimits_{\Omega}H\left(
e^{tv\alpha}\left(  \theta\left(  t,x\right)  -1/w\right)  \right)  dx.
\end{align*}

Note that as in Theorem \ref{TheoMinMax} we have
\begin{align*}
& \varphi_{1},\varphi_{2}\in C\left(  \left[  0,T\right]  ; \mathbb{R}
\right)  \bigcap C^{1}\left(  \left]  0,T\right]  ; \mathbb{R}\right)  ,\\
& \varphi_{1},\varphi_{2}\geq0 \text{ on } \left[  0,T\right] ,\\
& \varphi_{1}\left(  0\right)  =\varphi_{2}\left(  0\right)  =0.
\end{align*}
As in Theorem \ref{TheoMinMax} we may compute
\begin{align*}
&  \varphi_{1}^{\prime}\left(  t\right) \\
&  =-\int\nolimits_{\Omega}e^{tv\alpha}G\left(  m-e^{tv\alpha}\theta\right)
\left(  w\alpha\theta+\partial\theta/\partial t\right)  dx\\
&  =-\int\nolimits_{\Omega}e^{tv\alpha}G\left(  m-e^{tv\alpha}\theta\right)
\left(  \alpha-\pounds \theta+v\alpha\theta\right)  dx\\
&  =-\int\nolimits_{\Omega}\alpha G\left(  m-e^{tv\alpha}\theta\right)  dx\\
&  \quad+\int\nolimits_{\Omega}\left\langle A\left(  x\right)  \nabla
\theta,\nabla e^{tv\alpha}G\left(  m-e^{tv\alpha}\theta\right)  \right\rangle
dx\\
&  \leq\int\nolimits_{\Omega}\left\langle A\left(  x\right)  \nabla
\theta,\nabla e^{tv\alpha}G\left(  m-e^{tv\alpha}\theta\right)  \right\rangle
dx\\
&  \leq0.
\end{align*}
Since $\varphi_{1}^{\prime}\leq0$ on $%
\mathbb{R}
_{+}^{\ast}$, $\varphi_{1}$ is identically null on $\left[  0,T\right]  $ and
therefore almost everywhere in $\Omega$
\[
m\leq e^{tv\alpha}\theta\left(  t,x\right) .
\]
We also have
\begin{align*}
&  \varphi_{2}^{\prime}\left(  t\right) \\
&  ~~=\int\nolimits_{\Omega}G\left(  e^{tv\alpha}\left(  \theta-1/v\right)
\right)  \left(  -\alpha+v\alpha\theta+\partial\theta/\partial t\right)  dx\\
&  ~~=\int\nolimits_{\Omega}e^{tv\alpha}G\left(  e^{tv\alpha}\left(
\theta-1/v\right)  \right)  \left(  -\pounds \theta+v\alpha\theta\right)  dx\\
&  ~~=-\int\nolimits_{\Omega}\left\langle A\left(  x\right)  \nabla
\theta,\nabla e^{tv\alpha}G\left(  e^{tv\alpha}\left(  \theta-1/v\right)
\right)  \right\rangle dx\\
&  ~~=-\int\nolimits_{\Omega}e^{2tv\alpha}G^{\prime}\left(  e^{tv\alpha
}\left(  \theta-1/v\right)  \right)  \left\langle A\left(  x\right)
\nabla\theta,\nabla\theta\right\rangle dx\\
&  \qquad-\int\nolimits_{\Omega}\left(  e^{2tv\alpha}/v^{2}\right)  G^{\prime
}\left(  e^{tv\alpha}\left(  \theta-1/v\right)  \right)  \left\langle A\left(
x\right)  \nabla\theta,\nabla v\right\rangle dx\\
&  \qquad-\int\nolimits_{\Omega}G\left(  e^{tv\alpha}\left(  \theta
-1/v\right)  \right)  \left\langle A\left(  x\right)  \nabla\theta,\nabla
e^{tv\alpha}\right\rangle dx\\
&  \qquad-\int\nolimits_{\Omega}e^{tv\alpha}\left(  \theta-1/w\right)
G^{\prime}\left(  e^{tv\alpha}\left(  \theta-1/v\right)  \right) \\
&  \qquad\qquad\qquad\times\left\langle A\left(  x\right)  \nabla\theta,\nabla
e^{tv\alpha}\right\rangle dx\\
&  ~~\leq0.
\end{align*}
Since $\varphi_{2}^{\prime}\leq0$ on $\left[  0,T\right]  \setminus\left\{
0\right\}  $, $\varphi_{2}$ is identically null on $\left[  0,T\right]  $ and
therefore almost everywhere in $\Omega$
\[
\theta\left(  t,x\right)  \leq M.
\]

\end{proof}

Condition $\left(  \ref{CondMinMax}\right)  $ of Theorem \ref{TheoMinMax2} is
satisfied in particular when $g\geq0$. Using the same arguments as in the
proof of Proposition \ref{PropMaxMon}, there is a unique equilibrium
$\theta^{\ast}$, for the system $\left(  \ref{ModelIntraHoteDiff12}\right)
-\left(  \ref{ModelIntraHoteDiff32}\right)  $. $\theta^{\ast}$ is
asymptotically stable if and only if all the eigenvalues of the linear
operator $\pounds $ have nonnegative real parts. Stability of the equilibrium
$\theta^{\ast}$ has the advantage that the disease inhibition is maintained in
its neighborhood, which enables easier control strategies. In particular, the
norm of $\theta^{\ast}$ is a decreasing function of the control $u$.

\begin{proposition}
\label{PropositionSpectre}The real number $\lambda$ is not an eigenvalue of
$\pounds $ if at least one of the following conditions is satisfied:

\begin{enumerate}
\item[$\left(  i\right)  $] $\alpha\geq\lambda\left(  1-\theta_{_{1}}u\right)
$ almost everywhere in $\Omega$ and that inequality is strict on an
nonnegligible subset of $\Omega$.

\item[$\left(  ii\right)  $] There exists a real $k \ge0$ such that for every
$\theta\in E$
\[
\int\nolimits_{\Omega}\left\langle A\left(  x\right)  \nabla\theta
,\nabla\theta\right\rangle dx\geq k\left\Vert \theta\right\Vert _{H^{2}\left(
\Omega\right)  },
\]
and
\[
\left(  \alpha-\lambda\left(  1-\theta_{_{1}}u\right)  \right)  /\left(
1-\theta_{_{1}}u\right)  >-k
\]
almost everywhere in $\Omega$.
\end{enumerate}
\end{proposition}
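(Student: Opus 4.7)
The plan is to assume that $\theta\in E$ is an eigenvector, i.e.\ $\pounds\theta=\lambda\theta$, and to show that $\theta\equiv0$. My main tool is the quadratic form obtained by testing the eigenvalue equation against $\theta$ itself. Multiplying $\pounds\theta-\lambda\theta=0$ by $\theta$, integrating over $\Omega$, and applying Green's formula together with the homogeneous Neumann-type boundary condition built into $E$, I obtain the identity
\[
\int_{\Omega}\frac{\alpha-\lambda(1-\theta_{_{1}}u)}{1-\theta_{_{1}}u}\,\theta^{2}\,dx+\int_{\Omega}\langle A(x)\nabla\theta,\nabla\theta\rangle\,dx=0.
\]
This single identity is the engine for both cases.

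Under hypothesis $(i)$, the first integrand is nonnegative almost everywhere and strictly positive on a set of positive measure, while the second integrand is nonnegative since $A$ is positive semidefinite (and positive definite under $(H4)$). The identity therefore forces both terms to vanish. Vanishing of the first integral combined with the strict pointwise inequality on a non-negligible set yields $\theta=0$ on that set; vanishing of the second integral, together with $(H4)$, forces $\nabla\theta=0$ almost everywhere, so by connectedness of $\Omega$ the function $\theta$ is constant, and a constant function that vanishes on a set of positive measure is identically zero.

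Under hypothesis $(ii)$, I would bound the first integrand pointwise from below using $(\alpha-\lambda(1-\theta_{_{1}}u))/(1-\theta_{_{1}}u)>-k$ and estimate the second via the coercivity assumption $\int_{\Omega}\langle A\nabla\theta,\nabla\theta\rangle\,dx\geq k\|\theta\|_{H^{2}(\Omega)}^{2}$ (read with a squared norm, for dimensional consistency). If $\theta\not\equiv0$, the strict pointwise inequality becomes strict under integration against $\theta^{2}$, so
\[
0=\int_{\Omega}\tfrac{\alpha-\lambda(1-\theta_{_{1}}u)}{1-\theta_{_{1}}u}\theta^{2}dx+\int_{\Omega}\langle A\nabla\theta,\nabla\theta\rangle dx>-k\|\theta\|_{L^{2}}^{2}+k\|\theta\|_{H^{2}}^{2}\geq0,
\]
which is absurd, so $\theta\equiv0$.

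The main obstacle is the endpoint of case $(i)$: ruling out an eigenfunction supported exactly where $\alpha=\lambda(1-\theta_{_{1}}u)$, i.e.\ going from ``$\theta$ vanishes on a set of positive measure'' to ``$\theta\equiv0$''. This is where the interplay between the uniform positive definiteness $(H4)$, the $H^{2}$-regularity built into $E$, and the connectedness of $\Omega$ is essential; the argument does not survive without at least one of those ingredients, and I would make sure to invoke each at the right step. The second, minor obstacle is purely notational: the coercivity inequality in $(ii)$ must be understood with a squared $H^{2}$-norm on the right-hand side, so that the final chain of inequalities genuinely closes on $0>0$.
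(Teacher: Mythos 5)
Your proof is correct, and it is built on exactly the same object as the paper's: the quadratic form
\[
p_{1}\left(  \theta,\theta\right)  =\int\nolimits_{\Omega}\frac{\alpha-\lambda\left(  1-\theta_{_{1}}u\right)  }{1-\theta_{_{1}}u}\,\theta^{2}\,dx+\int\nolimits_{\Omega}\left\langle A\left(  x\right)  \nabla\theta,\nabla\theta\right\rangle dx,
\]
obtained by testing $\pounds \theta-\lambda\theta$ against $\theta$ and integrating by parts with the Neumann condition. The routes diverge after that. The paper introduces the full bilinear form $p_{1}\left(  \theta,\varphi\right)  $ and invokes Lax--Milgram, i.e.\ it establishes bijectivity of $\pounds -\lambda I$ and deduces injectivity as a by-product; you only prove injectivity, directly, by showing $p_{1}\left(  \theta,\theta\right)  >0$ for $\theta\not\equiv0$. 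Your version is the more economical one for this particular statement: excluding an eigenvalue needs nothing more than injectivity, and under condition $\left(  i\right)  $ the Lax--Milgram coercivity estimate $p_{1}\left(  \theta,\theta\right)  \geq c\left\Vert \theta\right\Vert ^{2}$ is actually not immediate (the zeroth-order coefficient may vanish on most of $\Omega$, so one would need a generalized Poincar\'{e} inequality combining the gradient term with the non-negligible set where the coefficient is positive -- a step the paper does not supply). You replace that by the softer argument ``both nonnegative terms vanish, hence $\nabla\theta=0$ and $\theta=0$ on a set of positive measure, hence $\theta\equiv0$,'' which is exactly what is needed and no more. Your two caveats are well placed and apply equally to the paper's proof: case $\left(  i\right)  $ genuinely requires the standing ellipticity assumption $\left(  H4\right)  $ and connectedness of $\Omega$ (neither is stated in the proposition itself; without uniform positive definiteness of $A$ an eigenfunction supported where $\alpha=\lambda\left(  1-\theta_{_{1}}u\right)  $ cannot be excluded), and the coercivity hypothesis in $\left(  ii\right)  $ must be read with $\left\Vert \theta\right\Vert _{H^{2}\left(  \Omega\right)  }^{2}$ for the final chain of inequalities to close.
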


\begin{proof}
Let $\theta,\varphi\in E$. Then we may compute%
\begin{align*}
&  \int\nolimits_{\Omega}\left(  \pounds \theta-\lambda\theta\right)
\times\varphi dx\\
&  ~~=\int\nolimits_{\Omega}\varphi\theta\left(  \alpha-\lambda\left(
1-\theta_{_{1}}u\right)  \right)  /\left(  1-\theta_{_{1}}u\right)  dx\\
&  \qquad-\int\nolimits_{\Omega}\operatorname{div}\left(  A\left(  x\right)
\nabla\theta\right)  \varphi dx\\
&  ~~=\int\nolimits_{\Omega}\varphi\theta\left(  \alpha-\lambda\left(
1-\theta_{_{1}}u\right)  \right)  /\left(  1-\theta_{_{1}}u\right)
+\left\langle A\left(  x\right)  \nabla\theta,\nabla\varphi\right\rangle dx\\
&  ~~\equiv p_{1}\left(  \theta,\varphi\right) .
\end{align*}
If either of the two conditions of the proposition is satisfied, we may use
the Lax-Milgram theorem to obtain the desired result.
\end{proof}

\begin{corollary}
The principal spectrum of $-\pounds $ is contained in $D_{0}^{\ast}%
\equiv\left\{  \lambda\in%
\mathbb{C}
^{\ast};\operatorname{Re}\left(  \lambda\right)  \leq0\right\}  $.
\end{corollary}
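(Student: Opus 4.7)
The plan is to reduce the corollary to facts already established about $\pounds$. Writing the claim in terms of $\pounds$ rather than $-\pounds$, it suffices to prove that every eigenvalue $\mu$ of $\pounds$ satisfies $\operatorname{Re}(\mu)\geq 0$, since the principal spectrum of $-\pounds$ is the negative of that of $\pounds$. I will treat the real and genuinely complex cases separately, invoking Proposition \ref{PropositionSpectre} in the first and the monotonicity computation of Proposition \ref{PropMaxMon} in the second.

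For a real $\mu<0$, I would apply Proposition \ref{PropositionSpectre}(i) with $\lambda=\mu$. Since $\theta_{1}u<1$ almost everywhere in $\Omega$, one has $1-\theta_{1}u>0$, so $\mu(1-\theta_{1}u)<0\leq\alpha$ throughout $\Omega$ with strict inequality everywhere. Hypothesis (i) therefore holds, and $\mu$ cannot be an eigenvalue of $\pounds$; equivalently, no positive real number is an eigenvalue of $-\pounds$.

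For a genuinely complex candidate eigenvalue $\mu=a+ib$ with $b\neq 0$, I would extend $\pounds$ componentwise to the complexification $E_{\mathbb{C}}\subset L^{2}(\Omega;\mathbb{C})$, equipped with the Hermitian pairing $\langle f,g\rangle=\int_{\Omega}f\bar{g}\,dx$. If $\pounds\theta=\mu\theta$ with a nonzero $\theta=\theta_{R}+i\theta_{I}$, then splitting into real and imaginary parts and taking the real part of the pairing of each side with $\theta$ yields
\[
\operatorname{Re}\langle\pounds\theta,\theta\rangle=\int_{\Omega}(\pounds\theta_{R})\theta_{R}\,dx+\int_{\Omega}(\pounds\theta_{I})\theta_{I}\,dx,
\]
which is nonnegative by the monotonicity established in Proposition \ref{PropMaxMon}. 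Comparing with $\operatorname{Re}\langle\pounds\theta,\theta\rangle=a\bigl(\|\theta_{R}\|_{L^{2}}^{2}+\|\theta_{I}\|_{L^{2}}^{2}\bigr)$ and using $\theta\neq 0$ then forces $a=\operatorname{Re}(\mu)\geq 0$.

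The main obstacle I anticipate is simply the bookkeeping for the complexification: Proposition \ref{PropMaxMon} is stated for real $\theta\in E$, so I would need a short justification that, because $\pounds$ has real coefficients and is $\mathbb{R}$-linear, the Hermitian identity above genuinely reduces to two real monotonicity estimates (the cross terms cancel by antisymmetry in $\theta_{R},\theta_{I}$). Once that is in place, combining the two cases gives $\operatorname{Re}(\mu)\geq 0$ for every eigenvalue of $\pounds$, which is precisely the desired inclusion of the principal spectrum of $-\pounds$ in $D_{0}^{\ast}$.
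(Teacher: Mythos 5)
Your argument is correct, but it takes a genuinely different route from the paper. The paper proves this corollary through semigroup theory: under $\left(  H4\right)  $ the operator $\pounds $ is maximal monotone, so $-\pounds $ generates a contraction semigroup $S_{\pounds }$; the point-spectral mapping inclusion $\exp\left(  t\sigma_{p}\left(  -\pounds \right)  \right)  \subseteq\sigma_{p}\left(  S_{\pounds }\left(  t\right)  \right)  $ combined with $\left\Vert S_{\pounds }\left(  t\right)  \right\Vert \leq1$ gives $\left\vert e^{\lambda t}\right\vert \leq1$ for every eigenvalue $\lambda$ of $-\pounds $, whence $\operatorname{Re}\left(  \lambda\right)  \leq0$. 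You instead argue directly from the numerical range: the monotonicity computation of Proposition \ref{PropMaxMon} yields $\operatorname{Re}\left\langle \pounds \theta,\theta\right\rangle \geq0$ on the complexification, which forces $\operatorname{Re}\left(  \mu\right)  \geq0$ for every eigenvalue $\mu$ of $\pounds $. This is the standard accretivity argument; it is more elementary in that it bypasses the Hille--Yosida machinery and the spectral mapping statement for point spectra (which the paper invokes without proof), and your bookkeeping for the complexification is handled correctly --- the cross terms do cancel and the real part reduces to two real monotonicity estimates. Two minor observations: your first case (real $\mu<0$ via Proposition \ref{PropositionSpectre}$\left(  i\right)  $) is entirely subsumed by the accretivity argument, so the case split is redundant; and, like the paper, you do not address the fact that $D_{0}^{\ast}$ is defined inside $\mathbb{C}^{\ast}$, so neither proof actually excludes $0$ from the point spectrum of $-\pounds $ (the paper's remark that $0\notin\exp\left(  t\sigma_{p}\left(  -\pounds \right)  \right)  $ is vacuous for that purpose). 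Neither issue is a gap in what you do prove, namely $\operatorname{Re}\left(  \lambda\right)  \leq0$ for every eigenvalue $\lambda$ of $-\pounds $.
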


\begin{proof}
From assumption $\left(  H4\right)  $, $\pounds $ is maximal monotone and
$S_{\pounds }$ is a contraction semigroup. Since $S_{\pounds }$ is a
contraction semigroup, the resolvant set $\rho\left(  -\pounds \right)  $ of
$-\pounds $ contains $%
\mathbb{R}
_{+}\backepsilon\left\{  0\right\}  $\footnote{See Theorem 3.1 in \cite{pazy},
p8.} and $\left\Vert S_{\pounds }\left(  t\right)  \right\Vert \leq1,$
$\forall t\in\left[  0,T\right]  $. Therefore, the spectral radius of
$S_{\pounds }\left(  t\right)  $ is less than one. On the other hand
$0\notin\exp\left(  t\sigma_{p}\left(  -\pounds \right)  \right)
\subseteq\sigma_{p}\left(  S_{\pounds }\left(  t\right)  \right)
\subseteq\left\{  0\right\}  \bigcup\exp\left(  t\sigma_{p}\left(
-\pounds \right)  \right)  ,$ $\forall t\in\left[  0,T\right]  $. Clearly, if
$\lambda=\operatorname{Re}\left(  \lambda\right)  +i\operatorname{Im}\left(
\lambda\right)  $ is an element of the principal spectrum of $-\pounds $ then
$\exp\left(  \lambda t\right)  $ is an element of the principal spectrum of
$S_{\pounds }\left(  t\right)  $ and $\left\vert \exp\left(  \lambda t\right)
\right\vert =\exp\left(  \operatorname{Re}\left(  \lambda\right)  t\right)
\left\Vert S_{\pounds }\left(  t\right)  \right\Vert \leq1$. It follows that
$\operatorname{Re}\left(  \lambda\right)  \leq0$.
\end{proof}

\begin{corollary}
The equilibrium $\theta^{\ast}$ is stable. Moreover if all the complex
eigenvalues $\lambda$ of the operator $\theta\mapsto\operatorname{div}\left(
A\left(  x\right)  \nabla\theta\right)  $ satisfy $\alpha\geq\left(
1-\theta_{_{1}}u\right)  \operatorname{Re}\left(  \lambda\right)  $ almost
everywhere in $\Omega$, then $\theta^{\ast}$ asymptotically stable.
\end{corollary}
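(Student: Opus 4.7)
My plan is to treat the two claims separately, leveraging the contraction semigroup and spectral structure already established.

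For Lyapunov stability, I would linearise around $\theta^{\ast}$. Setting $\eta\equiv\theta-\theta^{\ast}$ and using $\pounds\theta^{\ast}=\alpha$ together with $\partial_{t}\theta+\pounds\theta=\alpha$, the perturbation satisfies the homogeneous linear Cauchy problem
\[
\partial_{t}\eta+\pounds\eta=0,\qquad \eta(0)=\theta_{0}-\theta^{\ast}.
\]
Proposition \ref{PropMaxMon} combined with Hille--Yosida already gives that $-\pounds$ generates the contraction semigroup $S_{\pounds}$, so $\eta(t)=S_{\pounds}(t)\eta(0)$ and $\|\eta(t)\|_{L^{2}(\Omega)}\leq\|\eta(0)\|_{L^{2}(\Omega)}$ for every $t\geq 0$. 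This is exactly Lyapunov stability of $\theta^{\ast}$ in the $L^{2}$ topology.

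For asymptotic stability I would strengthen the preceding corollary, which only places the principal spectrum of $-\pounds$ in the closed left half-plane, by pushing it strictly into the open left half-plane. Because $A(x)$ is symmetric (by $H4$) and the multiplicative coefficient $\alpha/(1-\theta_{1}u)$ is real, $\pounds$ is a symmetric operator on $E$; together with the coercivity in $H4$ and the compact embedding $H^{1}(\Omega)\hookrightarrow L^{2}(\Omega)$ this yields a compact resolvent and an entirely real, discrete spectrum for $\pounds$. So only real eigenvalues matter and Proposition \ref{PropositionSpectre} applies directly. Given any eigenvalue $\nu$ of $\theta\mapsto\operatorname{div}(A(x)\nabla\theta)$ with eigenfunction $\phi$, a short calculation shows that on the span of $\phi$ the operator $\pounds$ acts as the shift $\alpha/(1-\theta_{1}u)-\nu$, so the hypothesis $\alpha\geq(1-\theta_{1}u)\operatorname{Re}(\nu)$ together with Proposition \ref{PropositionSpectre}(i) (taking $\lambda=\operatorname{Re}(\nu)$) rules out any nonpositive real eigenvalue of $\pounds$. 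Hence the spectral bound of $-\pounds$ is strictly negative, and since $\pounds$ is self-adjoint with compact resolvent the growth bound of $S_{\pounds}$ coincides with its spectral bound, so $\|S_{\pounds}(t)\eta(0)\|_{L^{2}}\to 0$ exponentially and $\theta^{\ast}$ is asymptotically stable.

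The main obstacle I expect is the rigorous passage from \emph{every eigenvalue of the pure diffusion operator satisfies the stated inequality} to \emph{no eigenvalue of $\pounds$ lies in the closed left half-plane}. When $\alpha/(1-\theta_{1}u)$ is not constant in $x$, the multiplication operator and $-\operatorname{div}(A\nabla\cdot)$ do not commute and need not share eigenfunctions, so the clean shift argument above only works verbatim in the constant-coefficient case. In the general case one must replace the shift by a Rayleigh-quotient / min-max comparison of the smallest eigenvalue of $\pounds$ with that of $-\operatorname{div}(A\nabla\cdot)$, reading off the hypothesis as a pointwise lower bound that propagates to the variational bound. That is the step where the real technical work of the argument is concentrated.
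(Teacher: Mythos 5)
The paper states this corollary with no proof at all, so there is nothing to compare your argument against; judged on its own merits, your first half is correct and your second half has two genuine gaps. The stability claim is fine: since the equation is already linear in $\theta$, the perturbation $\eta=\theta-\theta^{\ast}$ satisfies $\partial_{t}\eta+\pounds \eta=0$ exactly (no linearisation needed), and the contraction property of $S_{\pounds }$ gives $\left\Vert \eta(t)\right\Vert _{L^{2}}\leq\left\Vert \eta(0)\right\Vert _{L^{2}}$, which is Lyapunov stability in $L^{2}$.

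For asymptotic stability the first gap is one you flag yourself but never close: the ``shift'' identification of the eigenvalues of $\pounds $ with $\alpha/(1-\theta_{1}u)-\nu$ requires the multiplication operator and $\operatorname{div}(A\nabla\cdot)$ to share eigenfunctions, i.e.\ constant coefficients; in the general case your proposed Rayleigh-quotient repair is only announced, not carried out, so the proof is incomplete exactly where you say the work is. The second gap is more serious and unacknowledged: even in the commuting case the non-strict hypothesis $\alpha\geq(1-\theta_{1}u)\operatorname{Re}(\nu)$ only yields eigenvalues of $\pounds $ that are $\geq0$, not $>0$ --- take $\alpha\equiv0$, for which constants satisfy the conormal boundary condition, $0$ is an eigenvalue of $\pounds $, and $S_{\pounds }(t)$ does not decay. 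Ruling out $0$ needs the strictness clause of Proposition \ref{PropositionSpectre} $(i)$ (``strict on a nonnegligible subset''), which you silently drop when you claim the hypothesis ``rules out any nonpositive real eigenvalue.'' Finally, the detour through the spectrum of the diffusion operator is unnecessary: its Neumann spectrum lies in $\left]  -\infty,0\right]  $ and $\alpha\geq0$ is a standing assumption, so the corollary's hypothesis is essentially automatic, and the direct route is to apply Proposition \ref{PropositionSpectre} $(i)$ to each real $\lambda\leq0$ (the spectrum of the self-adjoint $\pounds $ being real and, with compact resolvent, discrete): for $\lambda<0$ the inequality $\alpha\geq\lambda(1-\theta_{1}u)$ is strict everywhere since $1-\theta_{1}u\geq1-\theta_{1}>0$, and for $\lambda=0$ it is strict on a nonnegligible set precisely when $\alpha$ is not almost everywhere zero; under that one extra condition the bottom eigenvalue of $\pounds $ is strictly positive and exponential decay of $S_{\pounds }(t)\eta(0)$ follows.
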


\subsection{Optimal control of the diffusion model\label{DiffusionModel3}}

\qquad In the previous section we have seen that the equilibrium of system
$\left(  \ref{ModelIntraHoteDiff12}\right)  -\left(
\ref{ModelIntraHoteDiff32}\right)  $ was conditionally asymptotically stable.
Whether or not the equilibrium $\theta^{\ast}$ is asymptotically stable, the
disease progression shall be contained with respect to some criteria given in
terms of costs. The aim of this section is to control the system such that the
following cost functional is minimized:
\[
J_{T}^{3}\left(  u\right)  =\int\nolimits_{0}^{T}\int\nolimits_{\Omega}\left(
\theta^{2}+k_{1}\left(  x\right)  u^{2}\right)  dxdt+\int\nolimits_{\Omega
}k_{2}\left(  x\right)  \theta^{2}\left(  T,x\right)  dx,
\]
where $k_{1}>0,k_{2}\geq0$ are bounded penalization terms. The function
$k_{1}(x)$ can be interpreted as the cost ratio related to the use of control
effort $u$; while $k_{2}$ is the cost ratio related to the magnitude of the
final inhibition rate $\theta\left(  T,\cdot\right)  $. In practice, $k_{1}$
reflects the spatial dependence of environmental sensitivity to control means,
while $k_{2}$ reflects geographical variations in the cost of the inhibition
rate of \textit{Colletotrichum} at the end of the control period.

In order to establish the optimal control, we will first need to define
$U^{K,C}$ as the set of controls $u\in C\left(  \left[  0,T\right]
;H^{1}\left(  \Omega;\left[  0,1\right]  \right)  \right) $ such that for
every $t,s\in\left[  0,T\right]  ,$ $\left\Vert u\left(  t,\cdot\right)
-u\left(  s,\cdot\right)  \right\Vert _{H^{1}\left(  \Omega\right)  }\leq
K\left\vert t-s\right\vert $ and $\left\Vert \nabla u\left(  t,\cdot\right)
\right\Vert _{L^{2}}\leq C.$ For every $K,C\geq0$, $U^{K,C}$ is nonempty.

\begin{theorem}
Let $K,C\geq0$. Then there is a control $v\in U^{K,C}$ which minimizes the
cost $J_{T}^{3}$.
\end{theorem}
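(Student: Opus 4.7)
The plan is to follow the same direct method used for the ODE case, but with a more delicate compactness argument that blends Ascoli--Arzel\`a in time with Rellich--Kondrachov in space, together with a continuous-dependence lemma for the state equation.

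First I would take a minimizing sequence $(u_n)\subset U^{K,C}$ with $J_T^3(u_n)\to J^{\ast}:=\inf_{U^{K,C}} J_T^3 \geq 0$. By definition of $U^{K,C}$, each $u_n(t,\cdot)$ lies in $H^1(\Omega)$ with $\|u_n(t,\cdot)\|_{L^2}$ bounded (since $0\leq u_n \leq 1$ and $\Omega$ is bounded) and $\|\nabla u_n(t,\cdot)\|_{L^2}\leq C$, so the family is uniformly bounded in $H^1(\Omega)$. The Rellich--Kondrachov theorem then makes $\{u_n(t,\cdot)\}_{n,t}$ relatively compact in $L^2(\Omega)$. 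Combined with the uniform Lipschitz estimate $\|u_n(t,\cdot)-u_n(s,\cdot)\|_{H^1}\leq K|t-s|$, which in particular gives equicontinuity into $L^2(\Omega)$, the Ascoli--Arzel\`a theorem yields a subsequence $(u_{n_k})$ converging in $C([0,T];L^2(\Omega))$ to some $u^{\ast}$.

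Next I would check that $u^{\ast}\in U^{K,C}$. The pointwise bounds $0\leq u^{\ast}\leq 1$ pass to the limit almost everywhere. The Lipschitz estimate in the $H^1$--norm is preserved by weak lower semicontinuity, after upgrading the pointwise-in-$t$ convergence to weak convergence in $H^1(\Omega)$ using the uniform $H^1$ bound and a standard subsequence argument; likewise $\|\nabla u^{\ast}(t,\cdot)\|_{L^2}\leq C$ follows from weak lower semicontinuity of the norm.

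The main obstacle is lower semicontinuity of $J_T^3$ along this subsequence, which requires continuous dependence $u\mapsto \theta_u$ for the state equation $(\ref{ModelIntraHoteDiff12})$--$(\ref{ModelIntraHoteDiff32})$. I would write $\theta_n:=\theta_{u_{n_k}}$ and $\theta^{\ast}:=\theta_{u^{\ast}}$, subtract the two equations, multiply by $\theta_n-\theta^{\ast}$, and integrate over $\Omega$. Using the uniform bounds $0\leq \theta \leq M\leq 1$ provided by Theorem \ref{TheoMinMax} (or Theorem \ref{TheoMinMax2}), the ellipticity assumption $(H4)$, and the fact that $1-\theta_1 u$ is bounded below uniformly in $n$ (since $\theta_1<1$ and $u\in[0,1]$), one obtains an energy inequality of the form
\[
\tfrac{1}{2}\tfrac{d}{dt}\|\theta_n-\theta^{\ast}\|_{L^2}^{2}+C\|\nabla(\theta_n-\theta^{\ast})\|_{L^2}^{2}\leq C_1\|\theta_n-\theta^{\ast}\|_{L^2}^{2}+C_2\|u_{n_k}-u^{\ast}\|_{L^2}^{2}.
\]
Since $\|u_{n_k}-u^{\ast}\|_{C([0,T];L^2)}\to 0$, Gr\"onwall's lemma yields $\theta_n\to\theta^{\ast}$ in $C([0,T];L^2(\Omega))$, and in particular $\theta_n(T,\cdot)\to\theta^{\ast}(T,\cdot)$ in $L^2(\Omega)$. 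Finally, since $k_1$ and $k_2$ are bounded, each of the three terms in $J_T^3$ passes to the limit: the $\theta^2$ and $k_2\theta^2(T,\cdot)$ terms by the strong $L^2$ convergence just established, and the $k_1 u^2$ term by the strong convergence $u_{n_k}\to u^{\ast}$ in $C([0,T];L^2(\Omega))$. Hence $J_T^3(u^{\ast})=J^{\ast}$, so $v:=u^{\ast}$ is the required minimizer.
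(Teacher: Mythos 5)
Your proof follows the same direct-method scheme as the paper's: take a minimizing sequence in $U^{K,C}$, extract a uniformly convergent subsequence via Ascoli--Arzel\`a, and conclude by continuity of the cost. The paper's version is far terser -- it simply invokes Ascoli and asserts continuity of $J_T^3$ in $u$ -- whereas you correctly supply the three justifications it leaves implicit (Rellich--Kondrachov for pointwise relative compactness in $L^2(\Omega)$, closedness of $U^{K,C}$ via weak lower semicontinuity, and the Gr\"onwall energy estimate for continuous dependence of $\theta$ on $u$), all of which are sound under the standing assumption that $\alpha$ is bounded.
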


\begin{proof}
Since $J_{T}^{3}$ is greater than zero it is bounded below. Let that infinimum
be $J^{\ast}$. There is a sequence $\left(  u_{n}\right)  _{n\in%
\mathbb{N}
}$ such that the sequence $\left(  J_{T}^{3}\left(  u_{n}\right)  \right)
_{n\in%
\mathbb{N}
}$ converges to $J^{\ast}$. Using definition of $U^{K,C}$ the $\left(
u_{n}\right)  _{n\in%
\mathbb{N}
}$ is bounded and uniformly equicontinuous on $\left[  0,T\right]  $. By the
Ascoli theorem, there is a subsequence $\left(  u_{n_{k}}\right)  $ which
converges to a control $v$. Since the cost function is continuous with respect
to $u$ it follows that $J_{T}^{3}\left(  v\right)  =J^{\ast}.$
\end{proof}

We first look the linearized system in the neighborhood of $\left(
\theta,u\right)  =\left(  \varepsilon,0\right)  $, where $\varepsilon$ depends
on $x$. Indeed, this case is of practical significance since the monitoring is
assumed to be continuous year-round, and the endemic period corresponds to
particular conditions. Thus the outbreak of the disease is "observable" at the
moment of onset. The linearized version of $\left(  \ref{ModelIntraHoteDiff12}%
\right)  $\ is
\begin{equation}
\partial\theta/\partial t=\alpha-\alpha\theta-\alpha\varepsilon u\theta_{_{1}%
}+\operatorname{div}\left(  A\left(  x\right)  \nabla\theta\right)  ,\text{ on
}\left]  0,T\right[  \times\Omega\label{ModelIntraHoteDiff12Lin}%
\end{equation}
Note that if $\varepsilon=0$ the linearized system is not controllable.

Let $\pounds _{1}\theta=-\alpha\theta+\operatorname{div}\left(  A\left(
x\right)  \nabla\theta\right)  $. Equation $\left(
\ref{ModelIntraHoteDiff12Lin}\right)  $ becomes
\[
\partial\theta/\partial t=\pounds _{1}\theta-\alpha\varepsilon u\theta_{_{1}%
}+\alpha,\text{ on }\left]  0,T\right[  \times\Omega
\]

\begin{theorem}
The linearized version of $\left(  \ref{ModelIntraHoteDiff12}\right)  -\left(
\ref{ModelIntraHoteDiff32}\right)  $ has an optimal control in $C\left(
\left[  0,T\right]  ;L^{2}\left(  \Omega\right)  \right)  $ given by
\[
u\left(  t,\cdot\right)  =\left(  1/k_{1}\right)  BP\left(  T-t,\cdot\right)
\theta\left(  t,\cdot\right)  +1/(\varepsilon\theta_{_{1}}),\text{ }%
t\in\left[  0,T\right]
\]
where the linear operator $P$ is solution to the following Riccati equation:
\[
\overset{\cdot}{P}=\pounds _{1}P+P\pounds _{1}-\left(  1/k_{1}\right)
PB^{2}P+I,\text{ }P\left(  0\right)  =k_{2}I.
\]
In that equation $I$ is the identity linear operator and $B$ is the linear
operator $\alpha\varepsilon\theta_{_{1}}I$.
\end{theorem}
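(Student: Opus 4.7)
The plan is to apply the standard linear-quadratic regulator theory for distributed parameter systems in Hilbert space to the equation $\partial\theta/\partial t = \pounds_1\theta - Bu + \alpha$ derived in the text. First I would absorb the affine inhomogeneity $\alpha$ by the change of variable $u = w + 1/(\varepsilon\theta_{1})$. Since $B = \alpha\varepsilon\theta_{1}I$, this shift satisfies $B\cdot 1/(\varepsilon\theta_{1}) = \alpha$, so the state equation reduces to the homogeneous
\[
\partial\theta/\partial t = \pounds_1\theta - Bw,\qquad \theta(0,x)=\theta_{0}(x).
\]
Expanding the quadratic penalty on $u$ and absorbing the linear term into the state cost (standard completion of squares) puts the problem in canonical LQ form.

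Next I would write the first-order necessary conditions. Introducing an adjoint state $p\in C([0,T];L^{2}(\Omega))$ governed by the backward parabolic equation
\[
-\partial p/\partial t = \pounds_{1}p + \theta,\qquad p(T,\cdot)=k_{2}\theta(T,\cdot),
\]
the G\^ateaux derivative of the associated Lagrangian with respect to $w$ vanishes precisely when $k_{1}w = Bp$, giving the candidate feedback $w = (1/k_{1})Bp$ and hence $u = (1/k_{1})Bp + 1/(\varepsilon\theta_{1})$. This matches the announced formula once the adjoint is decoupled from the state.

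The decoupling is done by an operator Riccati factorisation. Postulate $p(t,\cdot) = P(T-t,\cdot)\theta(t,\cdot)$ where $P(\tau)$ is a family of self-adjoint nonnegative operators on $L^{2}(\Omega)$; the terminal condition on $p$ forces $P(0)=k_{2}I$. Differentiating this identity in $t$ and substituting both the closed-loop forward equation $\partial\theta/\partial t = \pounds_{1}\theta - (1/k_{1})B^{2}P(T-t)\theta$ and the backward equation for $p$, then equating coefficients of the arbitrary state $\theta(t)$, yields
\[
\dot P = \pounds_{1}P + P\pounds_{1} - (1/k_{1})PB^{2}P + I,\qquad P(0)=k_{2}I,
\]
which is the desired Riccati equation.

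The main obstacle is establishing global well-posedness of this operator-valued Riccati equation on $[0,T]$ and, conversely, verifying that the resulting feedback actually minimises $J_{T}^{3}$ rather than merely being stationary. For the former I would invoke Proposition \ref{PropMaxMon}: the operator $\pounds_{1}$ differs from $\pounds$ only by the bounded multiplication perturbation $-\alpha$, so it inherits maximal monotonicity, while $B = \alpha\varepsilon\theta_{1}I$ is a bounded self-adjoint multiplication operator; these are exactly the hypotheses under which the abstract differential Riccati equation admits a unique strongly continuous solution in the cone of self-adjoint nonnegative operators on $[0,T]$. Sufficiency of the feedback then follows from the classical completion-of-squares argument, which produces the identity
\[
J_{T}^{3}(u) - J_{T}^{3}(u^{*}) = \int_{0}^{T}\!\!\int_{\Omega} k_{1}\,(w-w^{*})^{2}\,dx\,dt \;\ge\; 0,
\]
where $w^{*} = (1/k_{1})BP(T-t)\theta^{*}$ is the feedback evaluated on the closed-loop trajectory $\theta^{*}$, confirming that $u^{*} = w^{*} + 1/(\varepsilon\theta_{1})$ is the unique minimiser in $C([0,T];L^{2}(\Omega))$.
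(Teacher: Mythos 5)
Your overall route --- shift the control by $1/(\varepsilon\theta_{1})$ to remove the affine source $\alpha$, then run the standard Hilbert-space LQR machinery (adjoint state, Riccati factorisation $p=P(T-t)\theta$, completion of squares) --- is exactly the paper's: its proof is a two-line sketch that performs the same substitution and then defers to Zabczyk's treatment of linear regulators. So your write-up is a legitimate expansion of the intended argument, and the Riccati derivation and the well-posedness/sufficiency discussion are the right ingredients to supply.

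There is, however, one step that does not work as you state it. After substituting $u=w+1/(\varepsilon\theta_{1})$ the running cost becomes $k_{1}u^{2}=k_{1}w^{2}+2k_{1}w/(\varepsilon\theta_{1})+k_{1}/(\varepsilon\theta_{1})^{2}$. The middle term is linear in the \emph{control}, not the state, so it cannot be ``absorbed into the state cost'' by completing the square. If you keep it, the stationarity condition reads $2k_{1}w+2k_{1}/(\varepsilon\theta_{1})=Bp$, i.e.\ $u=Bp/(2k_{1})$ with no additive shift, and the optimality system acquires an extra affine co-state (a backward linear tracking equation alongside the Riccati equation) rather than the pure feedback announced in the theorem. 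The announced formula is the LQR feedback for the cost written in terms of the shifted control, i.e.\ for $\int(\theta^{2}+k_{1}w^{2})$; to close your argument you must either state explicitly that this is the functional being minimised after the change of variables, or carry the affine co-state term and show why it disappears. Two smaller points: your adjoint data $p(T)=k_{2}\theta(T)$ and source term $\theta$ are off by the customary factor $2$ relative to the Hamiltonian $\theta^{2}+k_{1}u^{2}+\langle p,\dot{\theta}\rangle$ (harmless only if the gain is rescaled consistently, and note the paper's own adjoint problem uses $2\widetilde{\theta}$ and $2k_{2}$); and it is $-\pounds_{1}=\pounds|_{u=0}$, not $\pounds_{1}$, that is maximal monotone --- $\pounds_{1}$ is the generator of the contraction semigroup --- so the perturbation argument invoking Proposition \ref{PropMaxMon} should be phrased for $-\pounds_{1}$.
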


\begin{proof}
(Sketch)

If we set $v=u-1/\left(  \varepsilon\theta_{_{1}}\right)  $ then equation
$\left(  \ref{ModelIntraHoteDiff12Lin}\right)  $ becomes
\[
\partial\theta/\partial t=\pounds _{1}\theta-\alpha\varepsilon v\theta_{_{1}%
},\text{ on }\left]  0,T\right[  \times\Omega.
\]
The rest of the proof is similar to the proof in \cite{jerzy} concerning
linear regulators.
\end{proof}

If $S_{\pounds _{1}}$ is the contraction semigroup generated by $\pounds _{1}
$, then we have $\forall t\in\left[  0,T\right]  $%
\[
P\left(  t\right)  f=S_{\pounds _{1}}\left(  t\right)  P\left(  0\right)
S_{\pounds _{1}}f
\]%
\[
+\int\nolimits_{0}^{t}S_{\pounds _{1}}\left(  t-s\right)  \left(  I-\left(
1/k_{1}\right)  PB^{2}P\right)  S_{\pounds _{1}}\left(  t-s\right)  fds
\]
Let now consider the nonlinear equation $\left(  \ref{ModelIntraHoteDiff12}%
\right)  $. Let $\pounds _{u}$ be the operator $\pounds $ corresponding to
control strategy $u$ and let $S_{\pounds _{u}}$ be the contraction semigroup
generated by $-\pounds _{u}$. Let
\[
U\equiv\left\{
\begin{array}
[c]{c}%
u\in C\left(  \left[  0,T\right]  ;H^{1}\left(  \Omega;\left[  0,1\right]
\right)  \right)  ;\\
\forall t\in\left[  0,T\right]  ,\text{ }S_{\pounds _{u}}\left(  t\right)
\text{ is invertible}%
\end{array}
\right\}  .
\]
Some necessary and sufficient conditions for a semigroup of operators to be
embedded in a group of operators are given in \cite{pazy}.

\begin{theorem}
Assume that there is a bounded admissible control $u^{\ast}\in U$ which
minimizes the cost function $J_{T}^{3}$. Let $\widetilde{\theta}$ be the
absolutely continuous solution of $\left(  \ref{ModelIntraHoteDiff12}\right)
-\left(  \ref{ModelIntraHoteDiff32}\right)  $ associated with $u^{\ast}$.
Then
\[
\int\nolimits_{\Omega}\left(  \left(  \widetilde{\theta}\left(  t_{0}%
,x\right)  \right)  ^{2}+k_{1}\left(  x\right)  \left(  u^{\ast}\left(
t_{0},x\right)  \right)  ^{2}-p\left(  t\right)  \pounds _{u^{\ast}}%
\widetilde{\theta}\left(  t,x\right)  \right)  dx
\]%
\[
\leq\int\nolimits_{\Omega}\left(  \left(  \widetilde{\theta}\left(
t_{0},x\right)  \right)  ^{2}+k_{1}\left(  x\right)  \left(  u\left(
t_{0},x\right)  \right)  ^{2}-p\left(  t\right)  \pounds _{u}\widetilde
{\theta}\left(  t,x\right)  \right)  dx,
\]
where $p$ is the absolutely continuous solution on $\left[  0,T\right]  $ of
the adjoint state problem
\begin{equation}
\label{AdjointStateProblem}\left\{
\begin{array}
[c]{l}%
\partial p/\partial t=\pounds _{u^{\ast}}p-2\widetilde{\theta},\text{ }\left(
t,x\right)  \in%
\mathbb{R}
_{+}^{\ast}\times\Omega\\
\left\langle A\left(  x\right)  \nabla p,n\right\rangle =0,\text{ on }%
\mathbb{R}
_{+}^{\ast}\times\partial\Omega\\
p\left(  T\right)  =2k_{2}\widetilde{\theta}\left(  T,\cdot\right)
\end{array}
\right.
\end{equation}

\end{theorem}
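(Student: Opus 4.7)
The plan is to prove a Pontryagin-type maximum principle via a convex variation of the optimal control. Define the Hamiltonian density
\[
\mathcal{H}(\theta, p, u)(t,x) \equiv \theta(t,x)^2 + k_{1}(x)\, u(t,x)^2 - p(t,x)\, \pounds_{u} \theta(t,x),
\]
so that the stated inequality is precisely the pointwise-in-$t$ Hamiltonian minimization $\int_{\Omega}\mathcal{H}(\widetilde{\theta},p,u^{\ast})\,dx \leq \int_{\Omega}\mathcal{H}(\widetilde{\theta},p,u)\,dx$, valid for every admissible $u$.

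First, I would fix an admissible $u \in U$ and form the convex variation $u_{\epsilon} = u^{\ast} + \epsilon(u - u^{\ast})$, $\epsilon \in [0,1]$, which stays admissible by convexity of the $[0,1]$-valued constraint. Let $\theta_{\epsilon}$ be the mild solution produced by the Hille--Yosida theorem for the control $u_{\epsilon}$. Differentiating the semigroup representation at $\epsilon = 0$, the Gateaux derivative $z \equiv \partial_{\epsilon} \theta_{\epsilon}|_{\epsilon=0}$ satisfies the linearized problem
\[
\partial_{t} z + \pounds_{u^{\ast}} z = -\frac{\alpha\,\widetilde{\theta}\,\theta_{1}\,(u - u^{\ast})}{(1 - \theta_{1} u^{\ast})^{2}}, \qquad z(0,\cdot)=0,
\]
with the conormal boundary condition $\langle A \nabla z, n\rangle = 0$ on $\partial\Omega$ inherited from membership in $E$.

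Second, I would exploit the optimality of $u^{\ast}$. Since $J_{T}^{3}(u_{\epsilon}) \geq J_{T}^{3}(u^{\ast})$ for small $\epsilon \geq 0$, the one-sided derivative at $\epsilon = 0$ is nonnegative, yielding
\[
\int_{0}^{T}\!\!\int_{\Omega}\bigl(2\widetilde{\theta}\,z + 2k_{1} u^{\ast}(u - u^{\ast})\bigr)\,dx\,dt + \int_{\Omega} 2k_{2}\,\widetilde{\theta}(T,\cdot)\,z(T,\cdot)\,dx \geq 0.
\]
To eliminate $z$, I would multiply the linearized equation by the adjoint $p$ from $(\ref{AdjointStateProblem})$, integrate over $(0,T)\times\Omega$, apply Green's identity using the zero-flux boundary conditions on both $p$ and $z$, and integrate by parts in $t$ using $p(T)=2k_{2}\widetilde{\theta}(T,\cdot)$ and $z(0,\cdot)=0$. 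The $z$-dependent terms collapse, and substituting back produces
\[
\int_{0}^{T}\!\!\int_{\Omega} (u - u^{\ast})\Bigl(2k_{1} u^{\ast} - \frac{p\,\alpha\,\widetilde{\theta}\,\theta_{1}}{(1 - \theta_{1} u^{\ast})^{2}}\Bigr)\,dx\,dt \geq 0.
\]
Since $u$ is arbitrary in $U$, a standard Lebesgue-point localization in $t$ promotes this integrated inequality to the pointwise-in-$t$ statement, recognizing that the bracketed expression is exactly $\partial_{u}\!\int_{\Omega}\mathcal{H}(\widetilde{\theta},p,u^{\ast})\,dx$; the claimed inequality then follows from convexity of $\mathcal{H}$ in $u$ (the $k_{1}u^{2}$ term combined with the convexity of $u\mapsto -p\alpha\widetilde{\theta}/(1-\theta_{1}u)$ on the regions where $p\widetilde{\theta}\leq 0$, or alternatively from directly checking the sign of the one-sided derivative).

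The principal technical obstacle is the rigorous differentiability of the control-to-state map $u \mapsto \theta$ in a topology strong enough to pass to the limit in both the running and terminal costs. Because the generator $\pounds_{u}$ depends nonlinearly on $u$ through the factor $1/(1-\theta_{1}u)$, the semigroup $S_{\pounds_{u_{\epsilon}}}$ varies with $\epsilon$; however, the uniform bounds $0 \leq u_{\epsilon} \leq 1$ and $0 \leq \theta_{\epsilon} \leq 1$ (supplied by the admissible class together with Theorems~\ref{TheoMinMax}--\ref{TheoMinMax2}) make the $\epsilon$-perturbation $\pounds_{u_{\epsilon}} - \pounds_{u^{\ast}}$ a bounded multiplication operator of norm $O(\epsilon)$ on $L^{2}(\Omega)$, which is what enables the Gateaux expansion above to be made rigorous and the integration by parts to be justified. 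This is the step where the paper's sketch will have to wave its hand, referring to the standard theory of linear regulators as in the preceding theorem.
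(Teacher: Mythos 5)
Your route is genuinely different from the paper's, and the difference is where the gap lies. The paper proves the theorem with a \emph{needle (spike) variation}: it replaces $u^{\ast}$ by an arbitrary admissible value $w$ on a shrinking interval $\left]t_{0}-h,t_{0}\right[$, computes $\partial^{+}\theta/\partial h=(\pounds_{u^{\ast}}-\pounds_{w})\widetilde{\theta}(t_{0},\cdot)$, propagates this with the semigroup $S_{\pounds_{u^{\ast}}}$, and reads off the stated inequality directly from $\partial^{+}J_{T}^{3}/\partial h\geq0$. Because the spike compares $\pounds_{u^{\ast}}$ with $\pounds_{w}$ for an \emph{arbitrary} $w$, this yields the global Hamiltonian minimization $\int_{\Omega}\mathcal{H}(\widetilde{\theta},p,u^{\ast})\,dx\leq\int_{\Omega}\mathcal{H}(\widetilde{\theta},p,u)\,dx$ with no convexity input. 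You instead use the convex variation $u_{\epsilon}=u^{\ast}+\epsilon(u-u^{\ast})$, and your computation of the linearized state, the adjoint pairing, and the resulting variational inequality
\[
\int_{0}^{T}\!\!\int_{\Omega}(u-u^{\ast})\Bigl(2k_{1}u^{\ast}-\frac{p\,\alpha\,\widetilde{\theta}\,\theta_{1}}{(1-\theta_{1}u^{\ast})^{2}}\Bigr)\,dx\,dt\geq0
\]
is correct; but this is only the first-order stationarity condition (essentially the paper's condition $(\ref{TranversalCondition})$, which the paper derives as a \emph{consequence} of the theorem). Upgrading it to the claimed global inequality requires $u\mapsto\int_{\Omega}\mathcal{H}(\widetilde{\theta},p,u)\,dx$ to be convex on the admissible set, and here it is not: the $u$-dependent part of $-p\pounds_{u}\widetilde{\theta}$ is $-p\alpha\widetilde{\theta}/(1-\theta_{1}u)$, whose second derivative in $u$ is $-2p\alpha\widetilde{\theta}\theta_{1}^{2}/(1-\theta_{1}u)^{3}$, sign-indefinite because nothing controls the sign of $p\widetilde{\theta}$ (the adjoint $p$ solves a backward problem and need not be nonpositive). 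You acknowledge this by restricting to ``regions where $p\widetilde{\theta}\leq0$,'' but that restriction is not available, and the alternative you offer (``directly checking the sign of the one-sided derivative'') is exactly the needle-variation argument you did not carry out. As written, your proof establishes the weak (local) maximum principle, not the theorem.

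A secondary caveat: your Lebesgue-point localization in $t$ is fine for the first-order condition, but note that once you localize the \emph{convex} variation in time you are already halfway to a needle variation; replacing $u_{\epsilon}$ there by the spike $u^{h}$ of the paper would close the gap and make the convexity discussion unnecessary. The rest of your machinery --- the Gateaux derivative of the control-to-state map, the use of the conormal boundary conditions and the symmetry of $\pounds_{u^{\ast}}$ in the Green identity, and the observation that $\pounds_{u_{\epsilon}}-\pounds_{u^{\ast}}$ is a bounded multiplication perturbation --- is sound and in fact more explicit than the paper's sketch on those points.
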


\begin{proof}
We give a proof following the maximum principle proof in \cite{jerzy}.

For an arbitrary control $w$ and sufficiently small $h\geq0$, define the
needle variation of $u^{\ast}$ as
\[
u^{h}\left(  t\right)  =\left\{
\begin{array}
[c]{l}%
u^{\ast}\left(  t\right)  \text{, }t\in\left[  0,t_{0}-h\right] \\
w\text{, }t\in\left]  t_{0}-h,t_{0}\right[ \\
u^{\ast}\left(  t\right)  \text{, }t\in\left[  t_{0},T\right]
\end{array}
\right.
\]
Let $\theta^{h}$ be the output corresponding to $u^{h}$. Since $u^{\ast}$
minimizes $J_{T}^{3}$, $J_{T}^{3}\left(  \theta^{h}\right)  >J_{T}^{3}\left(
\theta^{0}\right)  $ and $\partial^{+}J_{T}^{3}\left(  \theta^{0}\right)
/\partial h>0$.
\begin{align*}
&  \partial^{+}\theta^{0}\left(  t_{0},\cdot\right)  /\partial h\\
&  =\underset{h\rightarrow0^{+}}{\lim}\frac{1}{h}\left[  \theta^{h}\left(
t_{0},\cdot\right)  -\theta^{0}\left(  t_{0},\cdot\right)  \right] \\
&  =\underset{h\rightarrow0^{+}}{\lim}\frac{1}{h}\int\nolimits_{t_{0}%
-h}^{t_{0}}\left(  \pounds _{u^{0}}\widetilde{\theta}\left(  s,\cdot\right)
-\pounds _{u^{h}}\theta^{h}\left(  s,\cdot\right)  \right)  ds\\
&  =\left(  \pounds _{u^{0}}-\pounds _{w}\right)  \theta^{0}\left(
t_{0},\cdot\right)
\end{align*}
Since $v^{h}\left(  t\right)  =u^{\ast}\left(  t\right)  $ on $\left[
t_{0},T\right]  $, for almost $t$ in $\left[  t_{0},T\right]  $,
$\partial\theta^{h}/\partial t=\alpha-\pounds _{u^{\ast}}\theta^{h}$.
\begin{align*}
&  \partial\left(  \partial^{+}\theta^{0}\left(  t,\cdot\right)  /\partial
h\right)  /\partial t\\
&  =\left.  \partial\left(  \partial^{+}\theta^{h}\left(  t,\cdot\right)
/\partial h\right)  /\partial t\right\vert _{h=0}\\
&  =\left.  \partial^{+}\left(  \partial\theta^{h}\left(  t,\cdot\right)
/\partial t\right)  /\partial h\right\vert _{h=0}\\
&  =\left.  \partial^{+}\left(  \alpha-\pounds _{u^{\ast}}\theta^{h}\right)
/\partial h\right\vert _{h=0}\\
&  =\left.  -\pounds _{u^{\ast}}\left(  \partial^{+}\theta^{h}/\partial
h\right)  \right\vert _{h=0}\\
&  =-\pounds _{u^{\ast}}\left(  \partial^{+}\theta^{0}/\partial h\right)
\end{align*}
Therefore,
\[
\partial^{+}\theta^{0}\left(  t,\cdot\right)  /\partial h=S_{\pounds _{u^{\ast
}}}\left(  t\right)  \left(  S_{\pounds _{u^{\ast}}}\left(  t_{0}\right)
\right)  ^{-1}\left(  \pounds _{u^{0}}-\pounds _{w}\right)  \theta^{0}\left(
t_{0},\cdot\right)
\]
Consequently,
\begin{align*}
&  \partial^{+}\left(  \int\nolimits_{\Omega}k_{2}\left(  x\right)  \left(
\theta^{0}\left(  T,x\right)  \right)  ^{2}dx\right)  /\partial h\\
&  =2\int\nolimits_{\Omega}k_{2}\left(  x\right)  \theta^{0}\left(
T,x\right)  \partial^{+}\theta^{0}\left(  T,x\right)  /\partial hdx\\
&  =2\int\nolimits_{\Omega}k_{2}\left(  x\right)  \theta^{0}\left(
T,x\right)  S_{\pounds _{u^{\ast}}}\left(  T\right)  \left(
S_{\pounds _{u^{\ast}}}\left(  t_{0}\right)  \right)  ^{-1}\\
&  \qquad\qquad\times\left(  \pounds _{u^{0}}-\pounds _{w}\right)  \theta
^{0}\left(  t_{0},x\right)  dx\\
&  =2\int\nolimits_{\Omega}\left(  S_{\pounds _{u^{\ast}}}\left(
t_{0}\right)  \right)  ^{-1}S_{\pounds _{u^{\ast}}}\left(  T\right)
k_{2}\left(  x\right)  \theta^{0}\left(  T,x\right) \\
&  \qquad\qquad\times\left(  \pounds _{u^{0}}-\pounds _{w}\right)  \theta
^{0}\left(  t_{0},x\right)  dx,
\end{align*}
and in the same manner
\begin{align*}
&  \partial^{+}\left(  \int\nolimits_{0}^{T}\int\nolimits_{\Omega}\left(
\theta^{0}\left(  t,x\right)  \right)  ^{2}dxdt\right)  /\partial h\\
&  =\int\nolimits_{t_{0}}^{T}\int\nolimits_{\Omega}\left(  \partial^{+}\left(
\theta^{0}\left(  t,x\right)  \right)  ^{2}/\partial h\right)  dxdt\\
&  =2\int\nolimits_{t_{0}}^{T}\int\nolimits_{\Omega}\left(
S_{\pounds _{u^{\ast}}}\left(  t_{0}\right)  \right)  ^{-1}%
S_{\pounds _{u^{\ast}}}\left(  t\right)  \theta^{0}\left(  t,x\right) \\
&  \qquad\qquad\quad\times\left(  \pounds _{u^{0}}-\pounds _{w}\right)
\theta^{0}\left(  t_{0},x\right)  dxdt.
\end{align*}
Since $\partial^{+}J_{T}^{3}\left(  \theta^{0}\right)  /\partial h\geq0$, we
have
\begin{align*}
&  ~~2\int\nolimits_{t_{0}}^{T}\int\nolimits_{\Omega}\left(
S_{\pounds _{u^{\ast}}}\left(  t_{0}\right)  \right)  ^{-1}%
S_{\pounds _{u^{\ast}}}\left(  t\right)  \theta^{0}\left(  t,x\right)
\pounds _{u^{0}}\theta^{0}\left(  t_{0},x\right)  dxdt\\
&  ~~+2\int\nolimits_{\Omega}\left(  S_{\pounds _{u^{\ast}}}\left(
t_{0}\right)  \right)  ^{-1}S_{\pounds _{u^{\ast}}}\left(  T\right)
k_{2}\left(  x\right)  \theta^{0}\left(  T,x\right)  \pounds _{u^{0}}%
\theta^{0}\left(  t_{0},x\right)  dx\\
&  ~~-\int\nolimits_{\Omega}\left(  \left(  \widetilde{\theta}\left(
t_{0},x\right)  \right)  ^{2}+k_{1}\left(  x\right)  \left(  u^{\ast}\left(
t_{0},x\right)  \right)  ^{2}\right)  dx\\
&  \geq2\int\nolimits_{t_{0}}^{T}\int\nolimits_{\Omega}\left(
S_{\pounds _{u^{\ast}}}\left(  t_{0}\right)  \right)  ^{-1}%
S_{\pounds _{u^{\ast}}}\left(  t\right)  \theta^{0}\left(  t,x\right)
\pounds _{w}\theta^{0}\left(  t_{0},x\right)  dxdt\\
&  ~~+2\int\nolimits_{\Omega}\left(  S_{\pounds _{u^{\ast}}}\left(
t_{0}\right)  \right)  ^{-1}S_{\pounds _{u^{\ast}}}\left(  T\right)
k_{2}\left(  x\right)  \theta^{0}\left(  T,x\right)  \pounds _{w}\theta
^{0}\left(  t_{0},x\right)  dx\\
&  ~~-\int\nolimits_{\Omega}\left(  \left(  \widetilde{\theta}\left(
t_{0},x\right)  \right)  ^{2}+k_{1}\left(  x\right)  \left(  w\left(
t_{0},x\right)  \right)  ^{2}\right)  dx.
\end{align*}
Note that $t_{0}$ has been chosen arbitrarily. Let $p$ be the solution of the
adjoint state problem (\ref{AdjointStateProblem}). Then for every time
$t\in\left[  0,T\right]  $,%
\[
\int\nolimits_{\Omega}\left(  \left(  \widetilde{\theta}\left(  t_{0}%
,x\right)  \right)  ^{2}+k_{1}\left(  x\right)  \left(  u^{\ast}\left(
t_{0},x\right)  \right)  ^{2}-p\left(  t\right)  \pounds _{u^{\ast}}%
\widetilde{\theta}\left(  t,x\right)  \right)  dx
\]%
\[
\leq\int\nolimits_{\Omega}\left(  \left(  \widetilde{\theta}\left(
t_{0},x\right)  \right)  ^{2}+k_{1}\left(  x\right)  \left(  u\left(
t_{0},x\right)  \right)  ^{2}-p\left(  t\right)  \pounds _{u}\widetilde
{\theta}\left(  t,x\right)  \right)  dx.
\]

\end{proof}

This theorem shows that the optimal control $u^{\ast}$ minimizes the following
Hamiltonian.%
\[
H\left(  \widetilde{\theta},p,u\right)  =\int\nolimits_{\Omega}\left(
\widetilde{\theta}^{2}+k_{1}u^{2}-p\pounds _{u}\widetilde{\theta}\right)  dx
\]
As a result, we have the following necessary condition corresponding to
$\partial H/\partial u\left(  \widetilde{\theta},p,u^{\ast}\right)  =0$:
\begin{equation}
\int\nolimits_{\Omega}\left(  2k_{1}u^{\ast}-\alpha\theta_{1}\widetilde
{\theta}p/(1-\theta_{1}u^{\ast})^{2}\right)  dx=0.\label{TranversalCondition}%
\end{equation}
Condition $\left(  \ref{TranversalCondition}\right)  $ is satisfied in
particular if
\begin{equation}
2k_{1}u^{\ast}(1-\theta_{1}u^{\ast})^{2}=\alpha\theta_{1}\widetilde{\theta
}p,\label{SConditionForTransversalCondition}%
\end{equation}
which is analogous to (\ref{partialH}) for the within-host model . Then we can
adopt the following corresponding strategy
\[
u^{\ast}\left(  t\right)  =%
\begin{cases}
1 & \text{ when }27\alpha\theta_{_{1}}^{2}\theta p\geq8k_{1},\\
w_{3}\left(  t\right)  & \text{ when }27\alpha\theta_{_{1}}^{2}\theta
p<8k_{1},
\end{cases}
\]
where $w_{3}\left(  t\right)  $ is the element of $\left[  0,\min\left\{
\frac{1}{3\theta_{_{1}}},1\right\}  \right]  $ which is the nearest to the
smallest nonnegative solution of the equation $\left(
\ref{SConditionForTransversalCondition}\right)  $.

\section{Discussion\label{Discussion}}

In this paper two models of anthracnose control have been surveyed. These
models both have the general form
\[
\partial\theta/\partial t=f\left(  t,\theta,u\right)  +g\left(  t\right)  ,
\]
where $f$ is linear in the state $\theta$ but not necessarily in the control
$u$. As far as the authors know, this type of control system has not been
extensively studied. This may be due to the fact that physical control
problems usually do not take this form. The majority of such problems tend to
use "additive" controls (see \cite{coron,lions} for literature on models). But
in models of population dynamics, "mutiplicative" control are often more realistic.

Our first model characterizes the within-host behaviour of the disease. We
were able to explicitly calculate an optimal control strategy that effectively
reduces the inhibition rate compared to the case where no control is used. In
our second model we take into account the spatial spread of the disease by
adding a diffusion term. That makes the model more interesting but
considerably more difficult to analyze. Moreover, visual evaluation appears
more difficult because in this case the state of the system is a function of
three spatial variables plus time. Although we have provided equations
satisfied by the optimal control (for the linearized system), in this paper we
do not give a practical method for computing the optimal control. It is
possible that adapted gradient methods may be used \cite{anita}: this is a
subject of ongoing research.

Our models seems quite theoretical, but could be used for practical
applications if the needed parameters were provided. Indeed, in the literature
\cite{danneberger,dodd,duthie,mouen07} there are several attempts to estimate
these parameters. The principal advantage of our abstract approach is that it
can be used to set automatic means to control the disease which are able to
adapt themselves with respect to the host plant and to the parameters values.

Obviously our models can be improved. In particular, several results are based
on some conditions of smoothness of parameters, and the control strategy is
also very regular. In practice parameters are at most piecewise continuous,
and some control strategies are discontinuous. For instance, cultural
interventions in the farm are like pulses with respect to a certain calendar.
The application of antifungal chemical treatments are also pulses, and the
effects of these treatments though continuous are of limited duration. We are
currently investigating a more general model that takes into account those irregularities.

\begin{center}
ACKNOWLEDGEMENT
\end{center}

The authors thank Professor Sebastian ANITA for discussion with the first
author on modelling diseases spread with diffusion equations. That have been
possible through a stay at the University Alexandru Ioan Cuza in Iasi
(Romania) in the frame of the Eugen Ionescu scholarship program.

\end{document}